\title{Tropically unirational varieties}
\author[J.~Draisma]{Jan Draisma}
\address[Jan Draisma]{
Department of Mathematics and Computer Science\\
Technische Universiteit Eindhoven\\
P.O. Box 513, 5600 MB Eindhoven, The Netherlands\\
and Centrum voor Wiskunde en Informatica, Amsterdam,
The Netherlands}
\thanks{The first author is supported by a Vidi grant from
the Netherlands Organisation for Scientific Research (NWO)}
\email{j.draisma@tue.nl}
\author[B.J.~Frenk]{Bart Frenk}
\address[Bart Frenk]{
Department of Mathematics and Computer Science\\
Technische Universiteit Eindhoven}
\thanks{The second author is supported by a Free Competition grant from
the Netherlands Organisation for Scientific Research (NWO)}
\email{b.j.frenk@tue.nl}
\begin{document}

\begin{abstract}
We introduce {\em tropically unirational varieties}, which are
subvarieties of tori that admit dominant rational maps whose
tropicalisation is surjective. The central (and unresolved) question is
whether {\em all} unirational varieties are tropically unirational.
We present several techniques for proving tropical unirationality,
along with various examples.
\end{abstract}

\maketitle

\section{Tropical Unirationality}
Tropical geometry has proved useful for {\em implicitisation}, i.e.,
for determining equations for the image of a given polynomial or
rational map \cite{sturmfels_tevelev:2008,sturmfels_tevelev_yu:2007,sturmfels_yu:2008}. The fundamental underlying observation is that
tropicalising the map in a naive manner gives a piecewise linear map
whose image is contained in the tropical variety of the image of the
original map. Typically, this containment is strict, and for polynomial
maps with {\em generic} coefficients the difference between the two
sets was determined in \cite{sturmfels_yu:2008}.  Polynomial or rational maps arising
from applications are typically highly non-generic, and yet it would
be great if those maps could be tropicalised to determine the tropical
variety of their image. Rather than realising that ambitious goal, this
paper identifies a concrete research problem and presents several useful
tools for attacking it.

Thus let $K$ be an algebraically closed field with a non-Archimedean
valuation $v:K \to \RR \cup \{\infty\}$. We explicitly allow $v$ to be
trivial. Write $T=K^*$ for the one-dimensional torus over $K$ and $T^n$
for the $n$-dimensional torus. For a non-zero polynomial $f=\sum_\alpha
c_\alpha x^\alpha \in K[x_1,\ldots,x_m]$ we write $\Trop(f)$ for
the {\em tropicalisation} of $f$, i.e., for the function $\RR^m \to
\RR$ defined by 
\[ \Trop(f)(\xi):=\min_{\alpha} (v(c_\alpha) + x \cdot
\alpha),\ \xi \in \RR^m; \] 
here $\cdot$ stands for the standard dot product on $\RR^m$.  Throughout
this paper we will use greek letters to stand for tropical variables.

Strictly speaking, one should distinguish between a tropical
polynomial and the function that it defines, but in this paper we will
only need the latter. By Gauss's Lemma, we have $\Trop(fg)=\Trop(f)
+ \Trop(g)$ for non-zero polynomials $f,g$, and this implies that
we can extend the operator $\Trop$ to {\em rational} functions by
setting $\Trop(f/h)=\Trop(f)-\Trop(h)$. We further extend this definition to
rational maps $\varphi=(f_1,\ldots,f_n):T^m \dto T^n$ by setting $\Trop
\varphi:=(\Trop(f_1),\ldots,\Trop(f_n)):\RR^m \to \RR^n$.

If $X$ is a subvariety of $T^n$, then we write $\Trop(X)$ for the
tropicalisation of $X$, i.e., for the intersection of the corner loci
of all $\Trop(f)$ as $f$ ranges through the ideal of $X$ in $K[x_1^{\pm
1},\ldots,x_n^{\pm 1}]$. 

\begin{de}
A subvariety $X$ of $T^n$ is called {\em tropically unirational} if there
exists a natural number $p$ and a dominant rational map $\varphi:T^p
\dto X$ such that the image $\im \Trop(\varphi)$ equals $\Trop(X)$. The
map $\varphi$ is then called {\em tropically surjective}.
\end{de}

We recall that the inclusion $\im \Trop(\varphi) \subseteq \Trop(X)$
always holds \cite{draisma:2008}. The following example shows that this inclusion is
typically strict, but that $\varphi$ can sometimes be modified (at the
expense of increasing $p$) so as to make the inclusion into an equality.

\begin{ex} \label{ex:Line}
\begin{figure}
\includegraphics[width=.4\textwidth]{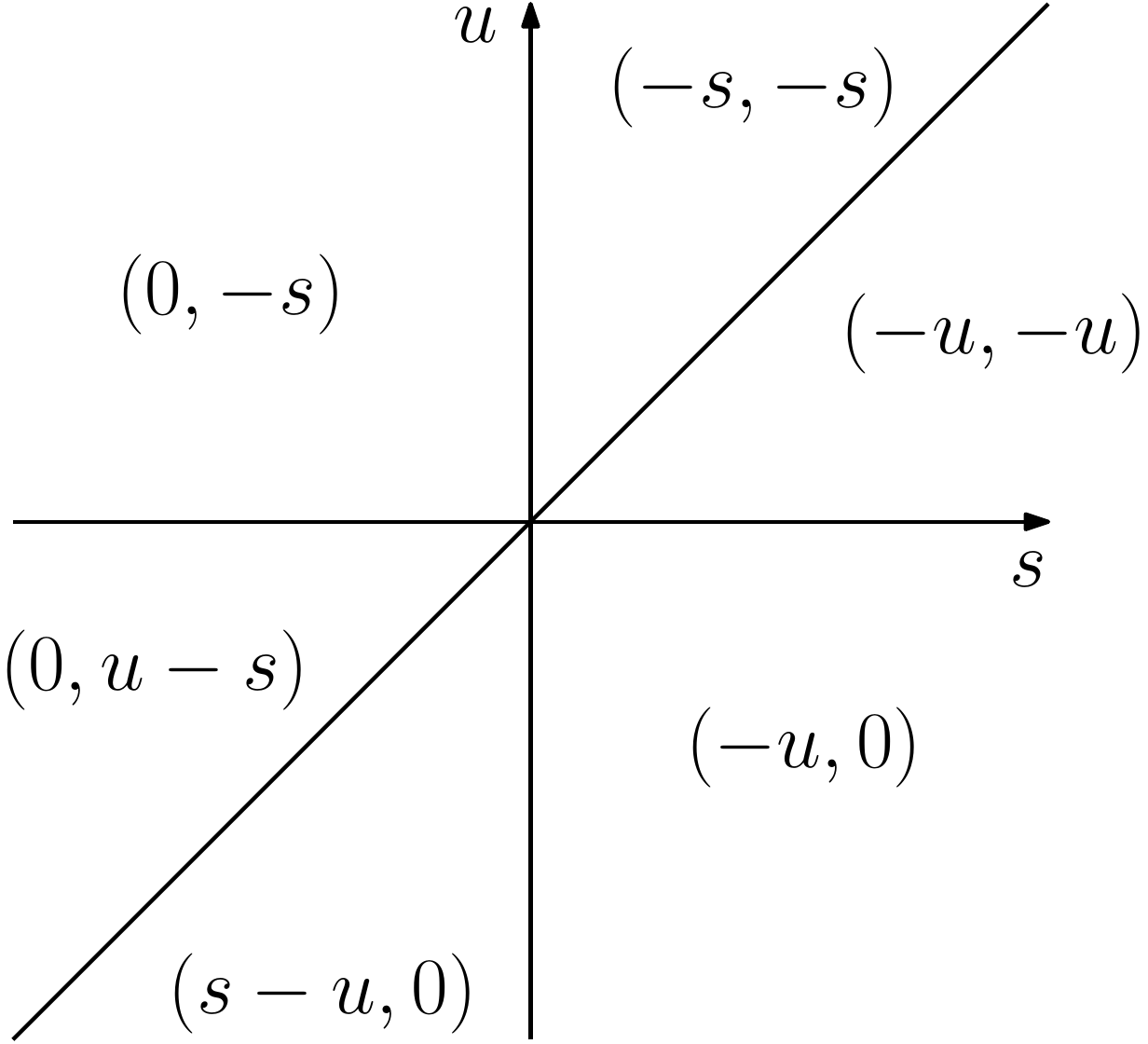}
\caption{Outside the lines, $\Trop(\psi)$ is linear and of
the indicated form.}
\label{fig:tropline}
\end{figure}
Let $X \subseteq T^2$ be the line defined by $y=x+1$, with the well-known
tripod as its tropical variety. Then the rational map $\varphi:T^1 \dto
T^2,\ t \mapsto (t,t+1)$ is dominant, but the image of its tropicalisation
only contains two of the rays of the tripod, so $\varphi$ is 
not tropically surjective. However, the map $\psi: T^2 \dto X \subseteq T^2$,
$(s,u) \mapsto (\frac{1+s}{u-s},\frac{1+u}{u-s})$ {\em is} tropically
surjective. Indeed, see Figure~\ref{fig:tropline}: under
$\Trop(\psi)$, the north-west and
south-east quadrants cover the arms of the tripod in the north and east
directions, respectively, and any of the two halfs of the north-east
quadrant covers the arm of the tripod in the south-west direction.
So $X$ is tropically unirational.  There is no tropically surjective
rational map into $X$ with $p=1$.
\end{ex}

The central question that interests us is the following.

\begin{que} \label{que:Main}
Is every unirational variety tropically unirational?
\end{que}

This paper is organised as follows. In Section~\ref{sec:Linear} we review
the known fact that (affine-)linear spaces and rational curves are
tropically unirational. In Section~\ref{sec:Combining} we prove that,
at least for {\em rational} varieties, our central question above is
equivalent to the apparently weaker question of whether $\Trop(X)$ is the
union of {\em finitely many} images $\im \Trop(\varphi_i),\ i=1,\ldots,N$
with each $\varphi_i$ a rational map $T^{p_i} \dto X$. This involves the
concept of {\em reparameterisations}: precompositions $\varphi \circ
\alpha$ of a dominant rational map $\varphi$ into $X$ with other rational
maps $\alpha$; since tropicalisation does not commute with composition,
$\Trop(\varphi \circ \alpha)$ may hit points of $\Trop(X)$ that are not
hit by $\Trop(\varphi)$.
In Section~\ref{sec:Birational} we introduce a somewhat ad-hoc technique
for finding suitable (re)parameterisations. Together with tools from
preceding sections this technique allows us, for example, to prove
that the hypersurface of singular $n \times n$-matrices is tropically
unirational for every $n$.  In Section~\ref{sec:Local} we prove that for
$X$ unirational, every sufficiently generic point on $\Trop(X)$ has a
$\dim(X)$-dimensional neighbourhood that is covered by $\Trop(\varphi)$
for suitable $\varphi$; here we require that $K$ has characteristic
zero. Combining reparameterisations, we find that there exist dominant
maps into $X$ whose tropicalisation hit full-dimensional subsets of all
full-dimensional polyhedra in the polyhedral complex $\Trop(X)$.  But more
sophisticated methods, possibly from {\em geometric tropicalisation}, will
probably be required to give a definitive answer to our central question.

\section*{Acknowledgements}
This paper was influenced by discussions with many people; among them,
Wouter Castryck, Filip Cools, Anders Jensen, Frank Sottile, Bernd
Sturmfels, and Josephine Yu. We thank all of them for their input!
The final version of this paper was written while the second author
was visiting Eva Maria Feichtner's group at the University of Bremen;
we thank them for their hospitality.

\section{Linear spaces, toric varieties, homogenisation, curves}
\label{sec:Linear}

We start with some elementary constructions of tropically unirational
subvarieties of tori.

\begin{lm} \label{lm:ToricHom}
If $X$ is a tropically unirational subvariety of $T^n$, then so is its
image $L_u \overline{\pi(X)}$ under any torus homomorphism $\pi:T^n \to
T^q$ followed by left multiplication $L_u$ with $u \in T^q$.
\end{lm}

\begin{proof}
If $\varphi:T^m \dto X$ is tropically surjective, then we claim that so is
$L_u \circ \pi \circ \varphi:T^m \dto Y:=\overline{\pi(X)}$. Indeed, since
$\phi$ is a monomial map and $L_u$ is just componentwise multiplication
with non-zero scalars, we have $\Trop(L_u \circ \pi \circ \varphi)=
\Trop(L_u) \circ \Trop(\pi) \circ \Trop(\varphi)$. Here the first map
is a translation over the componentwise valuation $v(u)$ of $u$, and
the second map is an ordinary linear map.  The claim follows from the
known fact that $\Trop(L_u) \Trop(\pi) \Trop(X)=\Trop(Y)$, which is a consequence of the main theorem of tropical geometry \cite{maclagan_sturmfels}
\end{proof}

The following is a consequence of a theorem by Yu and Yuster \cite{yu_yuster:2007}.

\begin{prop} \label{prop:Linear}
If $X$ is the intersection with $T^n$ of a linear subspace of $K^n$,
then $X$ is tropically unirational.
\end{prop}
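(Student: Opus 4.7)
My plan is to produce a single tropically surjective rational map of the very simple form $\varphi_A(y):=yA$, where $A$ is an $N\times n$ matrix over $K$ whose rows span $L$ and where $N$ will typically be chosen much larger than $\dim L$. After discarding any coordinate hyperplanes in which $L$ happens to lie (which we may do by replacing $T^n$ by a coordinate subtorus), such a $\varphi_A$ is a dominant rational map $T^N \dto X$. Since each coordinate function is a linear polynomial in $y$, its tropicalisation is
\[
\Trop(\varphi_A)(\eta)_i=\min_{j:\,A_{ji}\neq 0}\bigl(v(A_{ji})+\eta_j\bigr),
\]
i.e.\ the tropical matrix-vector product of $\eta$ with $\Trop(A)$; its image is the \emph{Stiefel tropical linear space} attached to the tropical matrix $\Trop(A)$.

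The task thus reduces to choosing $N$ and $A$ so that the Stiefel tropical linear space of $\Trop(A)$ equals $\Trop(X)$. This is exactly the content I would extract from the theorem of Yu and Yuster \cite{yu_yuster:2007}: every tropical linear space of the form $\Trop(L\cap T^n)$ admits a Stiefel representation, in which both the entries of $A$ and their valuations are arranged so that every maximal cone of $\Trop(X)$ is swept out as $\eta$ ranges over an appropriate region of $\RR^N$. Feeding such an $A$ into the construction of the previous paragraph yields an $A$ with row span $L$ and with $\im\Trop(\varphi_A)=\Trop(X)$, proving tropical unirationality. If $L$ is merely affine-linear rather than linear through the origin, I would first homogenise by embedding $X$ into $K^{n+1}$ via an auxiliary coordinate $z=1$, apply the construction to the resulting linear subspace $\tilde L\subseteq K^{n+1}$, and then post-compose with the torus homomorphism $(x_1,\ldots,x_n,z)\mapsto(x_1/z,\ldots,x_n/z)$; by Lemma~\ref{lm:ToricHom} this operation preserves tropical surjectivity.

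The main obstacle is the appeal to Yu-Yuster. The naive choice $N=\dim L$ typically produces a Stiefel tropical linear space that is only one ``sector'' of $\Trop(X)$, corresponding to a single basis of the matroid of $L$; covering the remaining maximal cones requires adjoining extra rows realising the other bases (or cocircuits) of the matroid, and the delicate part is to do this while simultaneously controlling both the row span of $A$ over $K$ and the valuations of its entries, so that every point of $\Trop(X)$ is actually attained by the tropical matrix-vector map. This matroid-theoretic bookkeeping is precisely what the cited theorem of Yu-Yuster carries out.
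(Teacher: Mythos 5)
Your proposal follows essentially the same route as the paper: pick a matrix $A$ over $K$ whose rows (the paper uses columns) span the linear space, chosen so that Yu--Yuster's theorem guarantees that tropical matrix multiplication by $v(A)$ maps onto $\Trop(X)$, and observe that this tropical map is exactly $\Trop$ of the $K$-linear map $y\mapsto yA$. The paper makes the Yu--Yuster hypothesis explicit (select vectors $v_1,\dots,v_p\in V$ so that every support in $V$ contains the support of some $v_i$), while you leave that selection implicit inside the citation; apart from that, and a small terminological slip identifying the image of the tropical matrix--vector map with the (generally larger) Stiefel tropical linear space, the argument is the same.
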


\begin{proof}
Let $V$ be the closure of $X$ in $K^n$, by assumption a linear
subspace. The support of an element $v \in V$ is the set of $i$ such that
$v_i \neq 0$. Choose non-zero vectors $v_1,\ldots,v_p \in V$ such that
the support of each vector in $V$ contains the support of some $v_i$. Let
$A \in K^{n \times p}$ be the matrix with columns $v_1,\ldots,v_p$,
and let $v(A) \in (\RR \cup \{\infty\})^{n \times p}$ be the image of
$A$ under coordinate-wise valuation. Then Yu-Yuster's theorem states
that $\Trop(V) \subseteq (\RR \cup \{\infty\})^n$ is equal to the image
of $(\RR \cup \{\infty\})^p$ under tropical matrix multiplication with
$v(A)$. This implies that the rational map $T^p \dto T^n,\ v \mapsto Av$
is tropically surjective.
\end{proof}

Another argument for the tropical unirationality of linear spaces
will be given in Section~\ref{sec:Birational}.

\begin{lm} \label{lm:Cone}
Let $X \subseteq T^n$ be a closed subvariety, and write $\tilde{X}$ for
the the cone $\{(t ,tp) \mid t \in K^*\}$ over $X$ in $T^{n+1}$. Then
$\tilde{X}$ is tropically unirational if and only if $X$ is.
\end{lm}

\begin{proof}
If $X$ is tropically unirational, then so is $T \times X \subseteq
T^{n+1}$, and hence by Lemma~\ref{lm:ToricHom} so is the image $\tilde{X}$
of the latter variety under the torus homomorphism $(t,p) \mapsto (t,tp)$.
Conversely, if $\tilde{X}$ is tropically unirational, then so is its
image $X$ under the torus homomorphism $(t,p) \mapsto t^{-1}p$.
\end{proof}

Note that $\Trop(\tilde{X})=\{0\} \times \Trop(X) + \RR(1,\ldots,1)$;
we will use this in Section~\ref{sec:Combining}.
We can now list a few classes of tropically unirational varieties.

\begin{cor} \label{cor:Affine}
Intersections with $T^n$ of affine subspaces of $K^n$ are
tropically unirational.
\end{cor}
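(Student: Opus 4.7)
The plan is to reduce the affine case to the linear case already handled in Proposition~\ref{prop:Linear} by homogenising, then descend back to $X$ via Lemma~\ref{lm:Cone}. Let $W \subseteq K^n$ be the affine subspace such that $X = W \cap T^n$.

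First I would form the homogenisation of $W$, namely the linear subspace
\[
\tilde{W} := \overline{\{(t, tw) \mid t \in K^*,\ w \in W\}} \subseteq K^{n+1}.
\]
A direct check (any affine relation $\sum a_i w_i = b$ on $W$ becomes the homogeneous relation $\sum a_i (tw_i) = b \cdot t$ on $\tilde{W}$) shows $\tilde{W}$ is genuinely a linear subspace of $K^{n+1}$, and by construction its intersection with $T^{n+1}$ is precisely the cone $\tilde{X} = \{(t, tp) \mid t \in K^*,\ p \in X\}$ appearing in Lemma~\ref{lm:Cone}.

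Next I would apply Proposition~\ref{prop:Linear} to the linear subspace $\tilde{W}$ to conclude that $\tilde{X} = \tilde{W} \cap T^{n+1}$ is tropically unirational. Finally I would invoke the ``only if'' direction of Lemma~\ref{lm:Cone}: since $\tilde{X}$ is tropically unirational, so is $X$ itself, which is exactly the conclusion of the corollary.

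There is no real obstacle here, since all the work is done by the two cited results; the only mild point to keep in mind is the need to homogenise by introducing an extra coordinate so that $W$ becomes genuinely linear, which is the standard trick behind Lemma~\ref{lm:Cone}. If one preferred to avoid homogenisation, an alternative would be to pick a basepoint $w_0 \in W$ and write $W = w_0 + V_0$ for a linear subspace $V_0$; then the rational parametrisation $T^p \dto T^n,\ v \mapsto w_0 + Av$ provided by the Yu--Yuster parametrisation of $V_0$ can be analysed directly, but this requires redoing the tropical surjectivity argument, whereas the cone approach slots cleanly into the framework already established.
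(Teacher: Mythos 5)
Your proof is correct and is essentially the paper's own argument: both pass to the cone $\tilde X$, observe it is the intersection with $T^{n+1}$ of the linear homogenisation of $W$, apply Proposition~\ref{prop:Linear}, and descend via Lemma~\ref{lm:Cone}. You merely spell out the homogenisation step a little more explicitly than the paper does.
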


\begin{proof}
If $X$ the intersection with $T^n$ of an affine subspace of $K^n$, then
the cone $\tilde{X}$ is the intersection with $T^{n+1}$ of a linear
subspace of $K^{n+1}$. Thus the corollary follows from
Proposition~\ref{prop:Linear} and Lemma~\ref{lm:Cone}.
\end{proof}

The following corollary has been known at least since
Speyer's thesis \cite{speyer_thesis:2005}.

\begin{cor} \label{cor:Curves}
Rational curves are unirational.
\end{cor}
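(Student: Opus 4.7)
The plan is to factor a birational parametrization of $X$ through the intersection with $T^N$ of an affine line, which is already tropically unirational by Corollary~\ref{cor:Affine}, and then invoke Lemma~\ref{lm:ToricHom} to transport the property to $X$ itself.

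Concretely, I would first pick a dominant rational map $\phi: T^1 \dto X$, write $\phi(t) = (f_1(t),\ldots,f_n(t))$ with $f_i \in K(t)^*$, and factor each $f_i$ over the algebraically closed field $K$ as
\[
    f_i(t) \;=\; c_i \prod_{j=1}^N (t-a_j)^{m_{ij}},
\]
where $a_1,\ldots,a_N$ exhausts the finite zeros and poles of all the $f_i$ (including $a_j = 0$ if necessary) and $m_{ij} \in \ZZ$. I would then introduce two intermediate maps: the affine rational map $\iota: T^1 \dto T^N$, $t \mapsto (t-a_1,\ldots,t-a_N)$, whose image is dense in the intersection with $T^N$ of the affine line $L \subset K^N$ obtained by shifting the diagonal by $(-a_1,\ldots,-a_N)$; and the torus homomorphism $\pi: T^N \to T^n$ given by the monomial map with exponent matrix $(m_{ij})$. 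By construction $\phi = L_c \circ \pi \circ \iota$ with $c = (c_1,\ldots,c_n) \in T^n$.

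To finish, I would apply Corollary~\ref{cor:Affine} to conclude that the one-dimensional slice $L \cap T^N$ is tropically unirational, and then apply Lemma~\ref{lm:ToricHom} to transport this property along $L_c \circ \pi$: dominance of $\phi$ onto $X$ forces $\pi \circ \iota$ to be dominant onto $c^{-1} X$, so $\overline{\pi(L \cap T^N)} = c^{-1} X$ and hence $L_c \overline{\pi(L \cap T^N)} = X$ is tropically unirational. No serious obstacle arises; the only point requiring a moment's thought is that the factorization accounts for all special behaviour of $\phi$, which is done by allowing $a_j = 0$ among the finite special points and by noting that the behaviour of $\phi$ at $t = \infty$ is automatically recorded in the total degrees $\sum_j m_{ij}$ that determine $\pi$.
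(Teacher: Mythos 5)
Your proposal is correct and takes essentially the same approach as the paper: factor each $f_i$ using the finitely many roots and poles, pass through the affine line $t \mapsto (t-a_j)_j$ in $T^N$ (tropically unirational by Corollary~\ref{cor:Affine}), and then apply Lemma~\ref{lm:ToricHom} to the toric map $L_c \circ \pi$.
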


\begin{proof}
Let $\varphi=(f_1,\ldots,f_n):T \dto T^n$ be a rational map, and let $Y$
be the rational curve parameterised by it. Let $S \subseteq K$ be a finite
set containing all roots and poles of the $f_i$, so that we can write
\[ f_i(x) = c_i \prod_{s \in S}(x-s)^{e_{is}}. \]
where the $c_i$ are non-zero elements of $K$ and the $e_{is}$ are integer
exponents. Let $X \subseteq T^S$ be the image of the affine-linear
linear map $T \dto T^S$ given by $x \mapsto (x-s)_{s \in S}$. Then $X$
is tropically unirational by Corollary~\ref{cor:Affine}. Let $\pi:T^S \to
T^n$ be the torus homomorphism mapping $(z_s)_{s \in S}$ to $(\prod_{s
\in S} z_s^{e_{is}})_i$, and let $u=(c_i)_i \in T^n$. Then
the curve $Y$ is the image of $X$ under $L_u \circ \pi$, and
the corollary follows from Lemma~\ref{lm:ToricHom}.
\end{proof}

\begin{cor} \label{cor:Rank2}
The variety in $T^{m \times n}$ of $m \times n$-matrices of rank at most $2$ is
tropically unirational.
\end{cor}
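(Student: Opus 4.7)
The plan is to use the obvious parametrisation. Every matrix in $X$ has, generically, a decomposition $M=uv^{T}+xy^{T}$ as a sum of two rank-one matrices, so the rational map
\[
\varphi:\ T^{m}\times T^{n}\times T^{m}\times T^{n}\ \dto\ X,\qquad (u,v,x,y)\mapsto uv^{T}+xy^{T},
\]
is dominant onto $X$. Its tropicalisation is
\[
\Trop(\varphi)(U,V,X',Y')_{ij}=\min(U_{i}+V_{j},\ X'_{i}+Y'_{j}),
\]
so $\im\Trop(\varphi)$ is exactly the set of ``tropical Barvinok-rank-$\le 2$'' matrices, namely those of the form $(\min(a_{i}+b_{j},\ c_{i}+d_{j}))_{ij}$. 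The proof therefore reduces to the equality $\im\Trop(\varphi)=\Trop(X)$, the inclusion $\subseteq$ being automatic.

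The reverse inclusion is the heart of the matter and is a theorem of Develin, Santos, and Sturmfels: for matrices of rank at most $2$, the three standard notions of tropical rank (tropical, Kapranov, and Barvinok) all coincide, so every element of $\Trop(X)$, i.e.\ every matrix of Kapranov rank at most $2$, can be written as the coordinate-wise minimum of two tropical rank-one matrices. This is the main obstacle; it is a genuinely rank-$2$ phenomenon that fails for $r\ge 3$ and does not follow from the general-purpose tools of this section alone.

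One can repackage the same argument to foreground those tools: writing $s_{i}=x_{i}/u_{i}$ and $t_{j}=y_{j}/v_{j}$ gives $M_{ij}=(u_{i}v_{j})\cdot(1+s_{i}t_{j})$, exhibiting $X$ as the coordinate-wise (Hadamard) product inside $T^{m\times n}$ of the Segre variety and the ``shifted Segre'' $Y=\{(1+s_{i}t_{j})_{ij}\}$. The Segre is tropically unirational by Lemma~\ref{lm:ToricHom} (it is the toric image of $T^{m+n}$), and Hadamard product is itself a torus homomorphism $T^{m\times n}\times T^{m\times n}\to T^{m\times n}$, so Lemma~\ref{lm:ToricHom} reduces the problem to tropical unirationality of $Y$ via its natural parametrisation $(s,t)\mapsto(1+s_{i}t_{j})_{ij}$. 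Unfortunately this reduction does not remove the obstacle: that parametrisation is tropically surjective onto $Y$ precisely when Kapranov and Barvinok ranks coincide for rank $2$, which is the same classical input.
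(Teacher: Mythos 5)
Your first argument, based on $\varphi(u,v,x,y)=uv^{T}+xy^{T}$ together with the Develin--Santos--Sturmfels coincidence of tropical, Kapranov and Barvinok rank in the rank-$\le 2$ case, does yield the corollary, but it imports a deep external theorem that the paper deliberately avoids; and your remark that the needed surjectivity ``does not follow from the general-purpose tools of this section alone'' is true only for \emph{your} choice of $\varphi$ -- with a slightly different dominant parametrisation the paper derives the corollary using nothing but Proposition~\ref{prop:Linear} and Lemma~\ref{lm:ToricHom}. The factorisation you want is not $M_{ij}=u_{i}v_{j}(1+s_{i}t_{j})$, where the factor $1+s_{i}t_{j}$ is quadratic in the parameters and therefore falls outside the scope of Proposition~\ref{prop:Linear}, but rather
\[
(u,x,v,y)\longmapsto \diag(u)\,(x\one^{t}+\one y^{t})\,\diag(v),\qquad M_{ij}=u_{i}v_{j}(x_{i}+y_{j}).
\]
Here $x\one^{t}+\one y^{t}$ is an honest \emph{linear} function of $(x,y)$, so the map factors through the linear map $(u,x,y,v)\mapsto(u,\,x\one^{t}+\one y^{t},\,v)$ into $T^{m}\times T^{m\times n}\times T^{n}$ -- handled by Proposition~\ref{prop:Linear} -- followed by the torus homomorphism $(u,z,v)\mapsto\diag(u)\,z\,\diag(v)$ -- handled by Lemma~\ref{lm:ToricHom}. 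Dominance is elementary: setting $a_{i}=u_{i}x_{i}$ and $b_{j}=v_{j}y_{j}$, the image is the locus of matrices $av^{T}+ub^{T}$, dense in the rank-$\le 2$ variety. So your second ``repackaging'' paragraph is one substitution away from the paper's proof: replacing the nonlinear $1+s_{i}t_{j}$ by the linear $x_{i}+y_{j}$ is exactly what removes the need for the Develin--Santos--Sturmfels input, and it realises the ``linear space smeared by a torus action'' pattern that the paper explicitly highlights in the remark following this corollary.
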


\begin{proof}
Let $\varphi:T^m \times T^m \times T^n \times T^n \dto T^{m \times n}$
be the rational map defined by
\[ \varphi: (u,x,v,y) \mapsto \diag(u) (x \one^t + \one y^t) \diag(v), \]
where $\diag(u),\diag(v)$ are diagonal matrices with the entries
of $u,v$ along the diagonals; $\one^t, \one$ are the $1
\times n$ and the $m \times 1$ row vectors with all ones;
and $x,y$ are interpreted as column vectors. Elementary
linear algebra shows that $\varphi$ is dominant into the variety $Y$
of rank-at-most-$2$ matrices. Moreover, $\varphi$ is the composition of
the linear map $(u,x,y,v) \mapsto (u,x \one^t + \one y^t,v)$ with the
torus homomorphism $(u,z,v) \mapsto (\diag(u) z \diag(v))$. Hence $Y$
is tropically unirational by
Proposition~\ref{prop:Linear} and Lemma~\ref{lm:ToricHom}.
\end{proof}

\begin{cor} \label{cor:Grassmannian}
The affine cone over the Grassmannian of two-dimensional vector
subspaces of an $n$-dimensional space (or more precisely its part in
$T^{\binom{n}{2}}$ with non-zero Pl\"ucker coordinates) is tropically
unirational.
\end{cor}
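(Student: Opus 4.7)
The plan is to exhibit a dominant rational map into $\tilde{\mathrm{Gr}}(2,n) \cap T^{\binom{n}{2}}$ that factors as a linear map followed by a torus homomorphism, so that Proposition~\ref{prop:Linear} and Lemma~\ref{lm:ToricHom} together yield the conclusion.

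The key observation is a Vandermonde-type factorisation of Plücker coordinates. The $2 \times n$-matrix with first row $(w_1, \ldots, w_n)$ and second row $(w_1 z_1, \ldots, w_n z_n)$ has $2 \times 2$-minors $p_{ij} = w_i w_j(z_j - z_i)$, and this formula defines a rational map $\varphi \colon T^n \times T^n \dto T^{\binom{n}{2}}$. First I would verify that $\varphi$ is dominant into $\tilde{\mathrm{Gr}}(2,n)$: any $M \in K^{2 \times n}$ with all Plücker coordinates nonzero has all columns nonzero, so a row operation with generic coefficients $(\alpha,\beta)$ (avoiding a union of $n$ lines in the $(\alpha,\beta)$-plane) renders the new first row entrywise nonzero, after which $w_i := M_{1i}$ and $z_i := M_{2i}/M_{1i}$ are well-defined and the $z_i$ are automatically pairwise distinct since the $p_{ij}$ are nonzero.

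Now decompose $\varphi = \pi \circ (\mathrm{id}_{T^n} \times L)$, where $L \colon T^n \dto T^{\binom{n}{2}}$ is $z \mapsto (z_j - z_i)_{i<j}$, the restriction to the torus of a linear map $K^n \to K^{\binom{n}{2}}$, and $\pi \colon T^n \times T^{\binom{n}{2}} \to T^{\binom{n}{2}}$ is the torus homomorphism $(w,u) \mapsto (w_i w_j u_{ij})_{i<j}$. The closure $X_1$ of $\im L$ inside $T^{\binom{n}{2}}$ is the intersection of a linear subspace of $K^{\binom{n}{2}}$ with the torus, hence tropically unirational by Proposition~\ref{prop:Linear}. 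Combining a tropically surjective map $\psi \colon T^p \dto X_1$ with $\mathrm{id}_{T^n}$ produces a tropically surjective map $T^n \times T^p \dto T^n \times X_1$, because $\Trop$ commutes with Cartesian products of rational maps. Applying Lemma~\ref{lm:ToricHom} to $\pi$ then shows that $\overline{\pi(T^n \times X_1)} = \tilde{\mathrm{Gr}}(2,n) \cap T^{\binom{n}{2}}$ is tropically unirational.

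The conceptual point is the recognition that on the chart of nonvanishing Plücker coordinates, each coordinate factors as a monomial times a difference of two further torus variables, so the linear-space and torus-homomorphism tools developed in this section already suffice. The only nonroutine step is the dominance check for $\varphi$, which is essentially the classical $\mathrm{GL}_2$-normalisation sketched above; no new techniques are required.
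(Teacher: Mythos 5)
Your proof is correct and follows essentially the same route as the paper: the paper's proof of Corollary~\ref{cor:Grassmannian} simply invokes the map $(u,x)\mapsto(u_iu_j(x_i-x_j))_{i<j}$ and says the argument is identical to that of Corollary~\ref{cor:Rank2}, namely factoring through an (affine-)linear map followed by a torus homomorphism and citing Proposition~\ref{prop:Linear} and Lemma~\ref{lm:ToricHom}. You have merely spelled out the dominance verification and the Cartesian-product step that the paper leaves implicit.
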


\begin{proof}
The proof is identical to the proof of
Corollary~\ref{cor:Rank2}, using the rational map
\[ T^n \times T^n \mapsto T^{\binom{n}{2}},\ (u,x) 
\mapsto (u_i u_j (x_i-x_j))_{i<j}. \]
\end{proof}

Interestingly, Grassmannians of two-spaces and varieties of rank-two
matrices are among the few infinite families of varieties for which {\em
tropical bases} are known \cite{develin_santos_sturmfels:2005}. It would be nice to have a direct
link between this fact and the fact, used in the preceding proofs, that
they are obtained by smearing around a linear space with a torus action.

\begin{cor}
The varieties defined by $A$-discriminants are tropically
unirational.
\end{cor}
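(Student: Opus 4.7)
My plan is to invoke the classical Horn--Kapranov uniformisation of the $A$-discriminant variety $\nabla_A \subseteq T^n$. Concretely, for $A=\{a_1,\ldots,a_n\}\subseteq\mathbb{Z}^d$, let $B\in\mathbb{Z}^{n\times m}$ be a Gale dual matrix of $A$ (its columns span the kernel of the matrix obtained by augmenting $A$ with a row of ones). Then $\nabla_A$ is the closure of the image of the dominant rational map
\[
\phi\colon T^d\times K^m \dto \nabla_A,\qquad
(\chi,\lambda)\mapsto \bigl(\chi^{a_i}\cdot(B\lambda)_i\bigr)_{i=1}^n.
\]

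The key observation is that $\phi$ factors as (i) the affine-linear map $(\chi,\lambda)\mapsto(\chi,\,B\lambda)$, whose closed image is the linear subspace $K^d\times BK^m$ of $K^{d+n}$, followed by (ii) the torus homomorphism $T^{d+n}\to T^n$ given by $(\chi,z)\mapsto(\chi^{a_i}z_i)_i$. By Corollary~\ref{cor:Affine}, the intersection of the linear subspace in step (i) with $T^{d+n}$ is tropically unirational. Applying Lemma~\ref{lm:ToricHom} to the torus homomorphism in step (ii) then yields that its image, $\nabla_A$, is tropically unirational as well.

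The main delicacy lies not in the tropical argument (which is really immediate once the parameterisation is in hand) but in formulating Horn--Kapranov correctly: $\nabla_A$ is traditionally regarded as a projective variety, so one must work with its affine cone intersected with a torus. Lemma~\ref{lm:Cone} ensures that this passage between the projective variety and its affine cone is harmless, so no substantive obstacle arises.
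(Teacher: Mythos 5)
Your proof is correct and follows exactly the route the paper sketches: the paper's proof is a one-line appeal to Horn--Kapranov uniformisation as a ``linear-by-toric'' description, handled precisely by Proposition~\ref{prop:Linear} (or Corollary~\ref{cor:Affine}) followed by Lemma~\ref{lm:ToricHom}, just as in Corollaries~\ref{cor:Rank2} and~\ref{cor:Grassmannian}. You have simply made explicit the Gale-dual parameterisation and the factorisation that the paper leaves implicit, including the harmless passage through Lemma~\ref{lm:Cone}.
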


\begin{proof}
Like in the previous two cases, these varieties are obtained from a linear
variety by smearing around with a torus action; this is the celebrated
{\em Horn Uniformisation} \cite{Gelfand94, Kapranov91}.
\end{proof}

In fact, this linear-by-toric description of $A$-discriminants was
used in \cite{Dickenstein07} to give an efficient way to compute the
Newton polytopes of these discriminants in the hypersurface case. A
relatively expensive step in this computation is the computation of
the tropicalisation of the kernel of $A$; the state of the art for this
computation is \cite{Rincon11}.




\section{Combining reparameterisations}
\label{sec:Combining}

A fundamental method for constructing tropically surjective maps into a
unirational variety $X \subseteq T^n$ is precomposing one dominant map
into $X$ with suitable rational maps.

\begin{de}
Given a rational dominant map $\varphi: T^m \dto X \subseteq T^n$, a {\em
reparameterisation} of $\varphi$ is a rational map of the form $\varphi \circ
\alpha$ where  $\alpha:T^p \dto T^m$ is a dominant rational
map and $p$ is some natural number.
\end{de}

The point is that, in general, $\Trop(\varphi \circ \alpha)$ is {\em not}
equal to $\Trop(\varphi) \circ \Trop(\alpha)$. So the former tropical map has
a chance of being surjective onto $\Trop(X)$ even if the latter is not.

\begin{ex}
In Example~\ref{ex:Line}, the map $\psi:(s,t) \mapsto
(\frac{1+s}{u-s},\frac{1+u}{u-s})$ is obtained from $\varphi:t \mapsto
(t,t+1)$ by precomposing with the rational map $\alpha$ sending $(s,u)$
to $\frac{1+s}{u-s}$. Hence $\psi$ is a tropically surjective
reparameterisation of the non-tropically surjective rational map $\varphi$.
\end{ex}

This leads to the following sharpening of our Question~\ref{que:Main}.

\begin{que}
Does every dominant rational map $\varphi$ into a unirational variety $X \subseteq
T^n$ have a tropically surjective reparameterisation?
\end{que}

Note that if $X$ is rational and $\varphi:T^m \dto X \subseteq T^n$ is
birational, then {\em every} dominant rational map $\psi:T^p \to X$
factors into the dominant rational map $(\varphi^{-1} \circ \psi):T^p \dto
T^m$ and the map $\varphi$. So for such pairs $(X,\varphi)$, the preceding
question is {\em equivalent} to the question whether $X$ is tropically
unirational.

We will now show how to combine reparameterisations at the expense of
enlarging the parameterising space $T^p$. For this we need a variant
of Lemma~\ref{lm:Cone}.
Let $\varphi=(\frac{f_1}{g},\ldots,\frac{f_n}{g}):T^m \dto X
\subseteq T^n$ be a dominant rational map where $g,f_1,\ldots,f_n \in
K[x_1,\ldots,x_m]$.  Let $d>0$ be a natural number greater than or equal
to $\max\{\deg(g),\deg(f_1),\ldots,\deg(f_n)\}$, and define
the homogenisations
\begin{align*} 
\tilde{g}:=x_0^{d}
g(\frac{x_1}{x_0},\ldots,\frac{x_n}{x_0})\ \text{ and } \ 
\tilde{f}_i:=x_0^{d}
f_i(\frac{x_1}{x_0},\ldots,\frac{x_n}{x_0}),\ i=1,\ldots,n.
\end{align*}
These are homogeneous polynomials of positive degree $d$ in $n+1$ variables
$x_0,\ldots,x_n$. The map $\tilde{\varphi}: T^{m+1} \dto T^{n+1}$ with components
$(\tilde{g},\tilde{f}_1,\ldots,\tilde{f}_n)$ is called a {\em degree-$d$
homogenisation of $\varphi$}. The components of one degree-$d$
homogenisation of $\varphi$ differ from those of another by a common
factor, which is a rational function with numerator and denominator
homogeneous of the same degree. Any degree-$d$
homomgenisation of $\phi$ is dominant into the cone 
$\tilde{X}$ in $T^{n+1}$ over $X$. Recall that 
$\Trop(\tilde{X})=\{0\} \times \Trop(X) + \RR(1,\ldots,1)$. The following
lemma is the analogue of this statement for $\im \Trop(\tilde{\varphi})$.

\begin{lm} \label{lm:Homogenisation}
Let $\tilde{\varphi}$ be any degree-$d$ homogenisation of $\varphi$. Then the
image of $\Trop(\tilde{\varphi})$ equals
$\{0\} \times (\im \Trop(\varphi)) + \RR(1,\ldots,1)$. 
\end{lm}

\begin{proof}
For the inclusion $\supseteq$, let $\xi \in \RR^m$ and let $\gamma \in \RR$.
Setting $\tilde{\xi}:=(0,x)+\frac{\gamma}{d}(1,\ldots,1) \in \RR^{m+1}$ and
using that $\tilde{g}$ is homogeneous of degree $d$ we find that
\[
\Trop(\tilde{g})(\tilde{\xi})=\Trop(\tilde{g})(0,\xi)+\gamma=\Trop(g)(\xi)+\gamma; \]
and similarly for the $\tilde{f}_i$. This proves that 
\[
\Trop(\tilde{\varphi})(\tilde{\xi})=\Trop(\varphi)(\xi)+\gamma(1,\ldots,1),
\]
from which the inclusion $\supseteq$ follows. For the inclusion
$\subseteq$ let
$\tilde{\xi}=(\tilde{\xi}_0,\ldots,\tilde{\xi}_m) \in \RR^{m+1}$
and set $\xi_i:=\tilde{\xi}_i-\tilde{\xi}_0,\ i=1,\ldots,m$. Again
by homogeneity we find
\[ \Trop(\tilde{g})(\tilde{\xi})=\Trop(\tilde{g})(0,\xi)+d
\tilde{\xi}_0=\Trop(g)(\xi)+d \tilde{\xi_0} \]
and similarly for the $\tilde{f}_i$. This implies
\[ 
\Trop(\tilde{\varphi})(\tilde{\xi})=\Trop(\varphi)(\xi)+d
\tilde{\xi}_0
(1,\ldots,1), \]
which concludes the proof of $\subseteq$. 
\end{proof}

\begin{lm}[Combination Lemma] \label{lm:Combination}
Let $\varphi:T^m \dto X \subseteq T^n$ and $\alpha_i:T^{p_i}
\dto T^m$ for $i=1,2$ be dominant rational maps. Then there exists a dominant rational
map $\alpha:T^{p_1+p_2+1} \dto T^m$ such that $\im \Trop(\varphi \circ \alpha)$
contains both $\im \Trop(\varphi \circ \alpha_1)$ and $\im \Trop(\varphi
\circ \alpha_2)$.
\end{lm}

\begin{proof}
Consider a degree-$d$ homogenisation $\tilde{\varphi}:T^{m+1}
\dto \tilde{X} \subseteq T^{n+1}$ and degree-$e$ homogenisations
$\tilde{\alpha}_1:T^{p_1+1} \dto T^{m+1}, \tilde{\alpha}_2:T^{p_2+1}
\dto T^{m+1}$ of $\alpha_1,\alpha_2$, respectively. 
Define $\tilde{\alpha}:T^{p_1+1} \times T^{p_2+1} \dto T^{m+1}$ by 
\[
\alpha(\tilde{u},\tilde{v})=\tilde{\alpha}_1(\tilde{u})+\tilde{\alpha}_2(\tilde{v}).
\]
We claim that 
\[ \im \Trop(\tilde{\varphi} \circ \tilde{\alpha}) \supseteq 
\im \Trop(\tilde{\varphi} \circ \tilde{\alpha}_i) \text{ for
}i=1,2.\]
Indeed, since $\tilde{\varphi}$ has polynomial components and since
$\tilde{\alpha}_2$ is homogeneous of positive degree $e$, we have
\[ \tilde{\varphi}(\tilde{\alpha}_1(\tilde{u})+\alpha_2(\tilde{v}))
=\tilde{\varphi}(\tilde{\alpha}_1(\tilde{u}))+\text{ terms divisible
by at least one variable $\tilde{v}_j$}. \]
As a consequence, for $(\tilde{\mu},\tilde{\nu}) \in
\RR^{p_1+1} \times \RR^{p_2+1}$ we have
\[ 
\Trop(\tilde{\varphi} \circ
\tilde{\alpha})(\tilde{\mu},\tilde{\nu})
= \min \{ \Trop(\tilde{\varphi} \circ
\tilde{\alpha}_1)(\tilde{\mu}), 
\text{ terms containing at least one $\tilde{\nu}_j$}
\}.
\]
Hence if $\tilde{\mu}$ is fixed first and $\tilde{\nu}$ is
then chosen to have 
sufficiently large (positive) entries, then we find
\[ \Trop(\tilde{\varphi} \circ
\tilde{\alpha})(\tilde{\mu},\tilde{\nu})
=\Trop(\tilde{\varphi} \circ \tilde{\alpha}_1)(\tilde{\nu}). \]
This proves that $\im \Trop(\tilde{\phi} \circ
\tilde{\alpha}_1) \subseteq \im \Trop(\tilde{\phi} \circ
\tilde{\alpha})$. Repeating the argument with the roles of
$1$ and $2$  reversed proves the claim. 

Now we carefully de-homogenise as follows. 
First, a straightforward computation shows that $\tilde{\varphi} \circ
\tilde{\alpha}_i$ is a degree-$de$ homogenisation of $\varphi \circ
\alpha_i$ for $i=1,2$, hence by Lemma~\ref{lm:Homogenisation} we have
\[ \im (\Trop(\tilde{\varphi} \circ \tilde{\alpha}_i))
=\{0\} \times \im \Trop(\varphi \circ \alpha_i) + \RR(1,\ldots,1). \]
Similarly, writing $\tilde{\alpha}=(a_0,\ldots,a_m):T^{p_1+1+p_2+1}
\dto T^m$ for the components of $\tilde{\alpha}$ we define
$\alpha:T^{p_1+p_2+1} \dto T^m$ as the de-homogenisation of
$\tilde{\alpha}$ given by
\[ \alpha(u,\tilde{v})
= \left(\frac{a_1(1,u,\tilde{v})}{a_0(1,u,\tilde{v})},
\ldots,
\frac{a_m(1,u,\tilde{v})}{a_0(1,u,\tilde{v})} \right). \]
A straightforward computation shows that $\tilde{\phi} \circ 
\tilde{\alpha}$ is a degree-$de$ homogenisation of $\phi
\circ \alpha$. Hence by Lemma~\ref{lm:Homogenisation} we
have 
\[ \im (\Trop(\tilde{\varphi} \circ \tilde{\alpha}))
=\{0\} \times \im \Trop(\varphi \circ \alpha) + \RR(1,\ldots,1). \]
Now the desired containment 
\[ \im \Trop(\phi \circ \alpha) \supseteq \im \Trop(\phi \circ
\alpha_i) \text{ for } i=1,2 \]
follows from
\begin{align*} 
\{0\} \times \im \Trop(\phi \circ \alpha) 
&= (\{0\} \times \RR^n) \cap \im \Trop(\tilde{\phi} \circ
\tilde{\alpha}) \\
&\supseteq
(\{0\} \times \RR^n) \cap \im \Trop(\tilde{\phi} \circ
\tilde{\alpha}_i) \\
&=
\{0\} \times \im \Trop(\phi \circ \alpha_i) 
\end{align*}
\end{proof}



\section{Birational projections} \label{sec:Birational}


In this section we show that rational subvarieties of $T^n$ that have
sufficiently many birational toric projections are tropically unirational.
Here is a first observation.

\begin{lem}
Let $X \subseteq T^n$ be an algebraic variety and $\pi: T^n \rightarrow
T^d$ a torus homomorphism whose restriction to $X$ is birational, with
rational inverse $\varphi$. Then $\Trop(\pi) \circ \Trop(\varphi)$
is the identify on $\RR^d$.
\end{lem}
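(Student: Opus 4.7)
The plan is to reduce the lemma to a single observation: although tropicalisation does not in general commute with composition of rational maps, it \emph{does} commute when the outer map is a torus homomorphism. Once this is established, the lemma is almost immediate. Indeed, by hypothesis $\varphi$ is the rational inverse of the restriction $\pi|_X$, so $\pi \circ \varphi = \mathrm{id}_{T^d}$ as rational maps $T^d \dto T^d$. Tropicalising both sides, using the commutation on the left and the fact that the identity $T^d \to T^d$ has components $x_1,\ldots,x_d$ whose tropicalisations are the coordinate functions on $\RR^d$, yields
\[
\Trop(\pi) \circ \Trop(\varphi) \;=\; \Trop(\pi \circ \varphi) \;=\; \Trop(\mathrm{id}_{T^d}) \;=\; \mathrm{id}_{\RR^d},
\]
which is exactly the claim.

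So the single key step is the commutation $\Trop(\pi \circ \varphi) = \Trop(\pi) \circ \Trop(\varphi)$. To verify it I would write $\pi$ in monomial form as $y \mapsto (y^{a_1}, \ldots, y^{a_d})$ with exponent vectors $a_i \in \ZZ^n$, so that if $\varphi = (\varphi_1, \ldots, \varphi_n)$ then the $i$-th component of $\pi \circ \varphi$ is the Laurent monomial $\prod_j \varphi_j^{a_{ij}}$ in the components of $\varphi$. The multiplicativity of $\Trop$ on nonzero rational functions (from Gauss's Lemma, as recalled in Section~1) then gives $\Trop\!\bigl(\prod_j \varphi_j^{a_{ij}}\bigr) = \sum_j a_{ij}\,\Trop(\varphi_j)$, which is exactly the $i$-th component of the linear map $\Trop(\pi)$ applied to $\Trop(\varphi)$. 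There is no real obstacle here; the lemma is essentially a bookkeeping statement isolating the one situation in which naive tropicalisation of a composition \emph{is} well-behaved, and I anticipate it being used in what follows to transport arbitrary tropical points of $\RR^d$ back to points of $\Trop(X)$ that are guaranteed to lie in $\im \Trop(\varphi)$.
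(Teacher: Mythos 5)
Your proof is correct, and it takes a genuinely different and cleaner route than the paper's. The paper's argument is analytic in flavour: it picks a point $\eta \in \RR^d$ at which $\Trop(\varphi)$ is affine-linear (a dense set of points), lifts $\eta$ to an actual point $y \in T^d$ with $v(y)=\eta$ using Zariski-density of the fibre $v^{-1}(\eta)$ (which relies on $K$ being algebraically closed), pushes $y$ through $\varphi$ and $\pi$, and reads off the required identity on that dense set. Your argument is purely algebraic and uniform: because $\pi$ is a monomial map, each component of $\pi\circ\varphi$ is a Laurent monomial $\prod_j \varphi_j^{a_{ij}}$ in the components of $\varphi$, so multiplicativity of $\Trop$ on rational functions (Gauss's Lemma, as recalled in Section~1) gives $\Trop(\pi\circ\varphi)=\Trop(\pi)\circ\Trop(\varphi)$ everywhere on $\RR^d$, with no genericity or density needed; combined with $\pi\circ\varphi=\mathrm{id}_{T^d}$ as rational maps (the definition of a rational inverse) and $\Trop(\mathrm{id}_{T^d})=\mathrm{id}_{\RR^d}$, the lemma follows. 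Interestingly, the commutation you isolate is precisely the principle the paper already invokes inside the proof of Lemma~\ref{lm:ToricHom}, where it writes $\Trop(L_u\circ\pi\circ\varphi)=\Trop(L_u)\circ\Trop(\pi)\circ\Trop(\varphi)$; so your proof is arguably more in the spirit of the surrounding text than the paper's own proof of this particular lemma. The one small point worth noting explicitly is that the exponent vectors $a_i$ may have negative entries, so one should check $\Trop(f^{-1})=-\Trop(f)$ for nonzero rational $f$; this is immediate from the paper's definition $\Trop(f/h)=\Trop(f)-\Trop(h)$.
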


\begin{proof}
Let $\eta \in \RR^d$ be a point where $\Trop(\varphi)$ is
(affine-)linear. Such
points form the complement of a codimension-$1$ subset and are therefore
dense in $\RR^d$. Hence it suffices to prove that $\Trop(\varphi)(\eta)$
maps to $\eta$ under $\Trop(\pi)$.  Let $y \in T^d$ be a point with
$v(y)=\eta$ where $\varphi$ is defined and such that $x:=\varphi(y) \in X$
satisfies
$\pi(x)=y$. Such points exist because
$v^{-1}(\eta)$ is Zariski-dense in $T^d$. Now
$\xi:=v(x)$ equals $\Trop(\varphi)(\eta)$ by
linearity of $\Trop(\varphi)$ at $\eta$ and 
$\Trop(\pi)\xi=\eta$ by linearity of $\Trop(\pi)$. 
\end{proof}

For our criterion we need the following terminology.

\begin{de}
Let $P \subseteq \RR^n$ be a $d$-dimensional polyhedron and let $A:\RR^n
\to \RR^d$ be a linear map. Then $P$ is called {\em $A$-horizontal}
if $\dim AP = d$.
\end{de}

\begin{prop} \label{prop:BirationalProjection}
Let $X \subseteq T^n$ be an algebraic variety and $\pi: T^n \rightarrow
T^d$ a torus homomorphism whose restriction to $X$ is birational,
with rational inverse $\varphi$. Using the Bieri-Groves theorem,
write $\Trop(X)=\bigcup_i P_i$ where the $P_i$ are finitely many
$d$-dimensional polyhedra. Then $\im \Trop(\varphi)$ is the union of
all $\Trop(\pi)$-horizontal polyhedra $P_i$.
\end{prop}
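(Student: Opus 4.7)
My plan is to prove the two inclusions separately. Throughout, abbreviate $A := \Trop(\pi)$; by the preceding lemma $A \circ \Trop(\varphi) = \mathrm{id}_{\RR^d}$, so $\Trop(\varphi)$ is a piecewise affine-linear section of $A$ whose image lies inside $\Trop(X)$.

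For the inclusion $\im \Trop(\varphi) \subseteq \bigcup_{P_i \text{ horizontal}} P_i$, I would partition $\RR^d$ into the finitely many maximal regions of linearity $R$ of $\Trop(\varphi)$. On each $d$-dimensional $R$, $\Trop(\varphi)$ is affine-linear and $A$ is injective on the image $Q := \Trop(\varphi)(R)$, so $Q$ is a $d$-dimensional $A$-horizontal affine polyhedron contained in $\bigcup_i P_i$. If $P_j$ is not $A$-horizontal, the restriction of $A$ to the affine span of $P_j$ has nontrivial kernel, while on the affine span of $Q$ it is injective; hence these $d$-dimensional affine spans cannot coincide, and $Q \cap P_j$ has dimension $<d$. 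So the full-dimensional part of $Q$ lies in the (closed) union of the horizontal $P_i$, which forces all of $Q$ to lie there. Varying $R$ completes this inclusion.

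For the reverse inclusion, fix a horizontal $P_i$. Since $A$ restricted to the affine span of $P_i$ is a linear isomorphism onto the affine span of $AP_i$, the map $A|_{P_i} : P_i \to AP_i$ is an affine bijection with continuous affine inverse $\sigma_i$. I would establish that $\Trop(\varphi)|_{AP_i} = \sigma_i$; this yields $P_i = \sigma_i(AP_i) = \Trop(\varphi)(AP_i) \subseteq \im \Trop(\varphi)$. Both maps are continuous on $AP_i$, so it suffices to check the equality on a dense subset of $AP_i$. For any $\eta \in AP_i$, the preceding lemma places both $\Trop(\varphi)(\eta)$ and $\sigma_i(\eta)$ in the fiber $A^{-1}(\eta) \cap \Trop(X)$, so whenever this fiber is a singleton they must coincide.

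The main obstacle is showing that this fiber is a singleton for a dense set of $\eta \in AP_i^{\circ}$. The standard route is via tropical intersection theory: since $\pi|_X$ is birational of degree one, the $A$-pushforward of the weighted tropical cycle $\Trop(X)$ equals $\RR^d$ with multiplicity one, so the weighted count over every generic fiber is one and each such fiber reduces to a single point of weight one. As a byproduct, two distinct horizontal top-dimensional polyhedra cannot have $A$-images that overlap in dimension $d$, so the set of $\eta \in AP_i^{\circ}$ lying in no other $AP_j$ is dense in $AP_i$, supplying the required agreement set. A more hands-on alternative is to lift an arbitrary $\xi \in P_i^{\circ}$ to a point $x \in X$ via the main theorem of tropical geometry in the locus where $\pi|_X$ is invertible, so that $x = \varphi(\pi(x))$, and argue that $y := \pi(x)$ is generic enough in $v^{-1}(A\xi)$ to force $v(\varphi(y)) = \Trop(\varphi)(A\xi)$; but making "generic enough" precise amounts to essentially the same multiplicity-one input.
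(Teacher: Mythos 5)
Your inclusion $\im \Trop(\varphi) \subseteq \bigcup_{\text{horizontal}} P_i$ is correct and is actually argued more directly than in the paper, which obtains it only at the very end as a consequence of the reverse inclusion together with the observation that the $\Trop(\pi)$-images of the horizontal $P_i$ already cover $\RR^d$. Your route through regions of linearity of $\Trop(\varphi)$, using $\Trop(\pi)\circ\Trop(\varphi)=\mathrm{id}$ from the preceding lemma, is a perfectly fine alternative.

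For the inclusion $\bigcup_{\text{horizontal}} P_i \subseteq \im\Trop(\varphi)$ you have correctly isolated the crux: one must identify $\Trop(\varphi)|_{AP_i}$ with the affine section $\sigma_i$. But the appeal to tropical push-forward/degree-one intersection theory is heavier than needed, and your closing claim --- that the lifting argument ``amounts to essentially the same multiplicity-one input'' --- is where you go wrong. The paper's lifting argument does not require any genericity of $y$ within the fibre $v^{-1}(\eta)$, nor that the fibre $A^{-1}(\eta)\cap\Trop(X)$ be a singleton. It only uses that at a point $\eta$ where $\Trop(\varphi)$ is (locally) affine-linear, the minimum in each tropicalised numerator and denominator is attained by a \emph{unique} monomial; consequently $v(\varphi(y))=\Trop(\varphi)(v(y))$ holds for \emph{every} $y$ with $v(y)=\eta$ at which $\varphi$ is defined --- this is elementary non-Archimedean arithmetic, not a multiplicity statement. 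The paper therefore picks $\xi$ in a dense subset of $P_i$ (namely where $\Trop(\varphi)$ is linear at $\eta:=\Trop(\pi)\xi$; density follows from horizontality), lifts $\xi$ to $x\in X$ with $v(x)=\xi$ and $\varphi(\pi(x))=x$ (possible by birationality plus density of valuation fibres), and reads off $\Trop(\varphi)(\eta)=v(\varphi(\pi(x)))=v(x)=\xi$. No count over the fibre is needed, and no two-dimensional overlap of $AP_i$'s has to be ruled out. Your proof can certainly be pushed through via the Sturmfels--Tevelev push-forward formula, but that machinery is avoidable, and you should not dismiss the direct lift on the grounds that it secretly needs the same input --- it does not.
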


\begin{proof}
Let $P_i$ be a $\Trop(\pi)$-horizontal polyhedron. We want to prove that
$\Trop(\varphi) \circ \Trop(\pi)$ is the identity on $P_i$.  To this
end, let $\xi \in P_i$ be such that $\Trop(\varphi)$ is
affine-linear at
$\eta:=\Trop(\pi)\xi$. The fact that $P_i$ is $\Trop(\pi)$-horizontal
implies that such $\xi$ are dense in $P_i$. To prove that
$\Trop(\varphi)(\eta)$ equals $\xi$ let $x \in X$ be a
point with $v(x)=\xi$ such that $\varphi$ is defined at $y:=\pi(x)$
and satisfies $\varphi(y)=x$. The existence of such a point follows
from birationality and the density of fibers in $X$ of the valuation
map \cite{payne:2009}. Now $\eta$ equals $v(y)$ by linearity of $\Trop(\pi)$
and $\xi=v(x)=v(\varphi(y))$ equals $\Trop(\varphi)(\eta)$ by linearity
of $\Trop(\varphi)$ at $\eta$.  Hence $\Trop(\varphi) \circ \Trop(\pi)$
is the identity on $P_i$, as claimed. Thus $\im \Trop(\varphi)$ contains
$P_i$. Since the projections of $\Trop(\pi)$-horizontal polyhedra $P_i$
together form all of $\RR^d$, we also find that $\im \Trop(\varphi)$
does not contain any points outside those polyhedra.
\end{proof}

%
%

\begin{cor}\label{cor:dense_unirational}
Let $X \subseteq T^n$ be a birational
variety and write $\Trop(X)=\bigcup_i P_i$ as in
Proposition~\ref{prop:BirationalProjection}. If for each $P_i$ there
exists a torus homomorphism $\pi: T^n \to T^d$ that is birational on $X$
and for which $P_i$ is $\Trop(\pi)$-horizontal, then $X$ is
tropically unirational.
\end{cor}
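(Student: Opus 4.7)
The plan is to exhibit a single tropically surjective rational map into $X$ by repeatedly applying the Combination Lemma to the birational inverses $\varphi_i$ produced by the hypothesis. Write $\Trop(X)=\bigcup_{i=1}^{N}P_i$ and, for each $i$, let $\pi_i:T^n\to T^d$ be the given torus homomorphism birational on $X$, with rational inverse $\varphi_i:T^d\dto X$. By Proposition~\ref{prop:BirationalProjection} applied to the pair $(\pi_i,\varphi_i)$ (using that $P_i$ is $\Trop(\pi_i)$-horizontal), we already know that $P_i\subseteq \im\Trop(\varphi_i)$, so
\[ \bigcup_{i=1}^N \im\Trop(\varphi_i) \;\supseteq\; \bigcup_{i=1}^N P_i \;=\; \Trop(X). \]
Hence it suffices to build a single dominant rational map whose tropicalisation covers the union of the $\im\Trop(\varphi_i)$.

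To put the different $\varphi_i$ in a form the Combination Lemma can handle, I fix one of them, say $\varphi:=\varphi_1:T^d\dto X$, and for every index $i$ define
\[ \alpha_i := \pi_1\circ \varphi_i \;:\; T^d\dto T^d. \]
The torus homomorphism $\pi_1$ is defined everywhere on $T^n$, and both $\varphi_i$ (dominant onto $X$) and $\pi_1|_X$ (birational onto $T^d$) are dominant, so $\alpha_i$ is a well-defined dominant rational map. Since $\pi_1|_X$ is birational with inverse $\varphi_1$, the composition $\varphi_1\circ\pi_1$ is the identity on $X$ as a rational map, giving the key identity $\varphi\circ\alpha_i=\varphi_i$ and therefore $\im\Trop(\varphi\circ\alpha_i)=\im\Trop(\varphi_i)$.

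Now I apply Lemma~\ref{lm:Combination} inductively to the finite list $\alpha_1,\ldots,\alpha_N$. In the first step I combine $\alpha_1$ and $\alpha_2$ into a single dominant rational map $\beta_2:T^{2d+1}\dto T^d$ such that $\im\Trop(\varphi\circ\beta_2)$ contains both $\im\Trop(\varphi\circ\alpha_1)$ and $\im\Trop(\varphi\circ\alpha_2)$; then I combine $\beta_2$ with $\alpha_3$ to produce $\beta_3$, and so on. After $N-1$ applications I obtain a dominant rational map $\alpha:T^p\dto T^d$ satisfying
\[ \im\Trop(\varphi\circ\alpha) \;\supseteq\; \bigcup_{i=1}^N \im\Trop(\varphi\circ\alpha_i) \;=\; \bigcup_{i=1}^N \im\Trop(\varphi_i) \;\supseteq\; \Trop(X). \]
The reverse inclusion $\im\Trop(\varphi\circ\alpha)\subseteq\Trop(X)$ is the standard containment mentioned after the definition of tropical unirationality, so $\varphi\circ\alpha$ is tropically surjective and $X$ is tropically unirational.

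The proof is a straightforward assembly of results already in hand, and I do not anticipate a genuine obstacle. The only mildly delicate point is the reparameterisation trick $\alpha_i:=\pi_1\circ\varphi_i$ which rewrites every $\varphi_i$ as $\varphi\circ\alpha_i$ for one fixed $\varphi$; this is precisely what is needed to bring the situation into the shape required by the Combination Lemma, which manipulates the input of a single dominant map rather than combining different maps directly.
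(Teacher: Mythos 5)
Your proof is correct and follows essentially the same route as the paper's, which is terse to the point of just saying ``Now use the Combination Lemma.'' The factorization $\varphi_i=\varphi_1\circ(\pi_1\circ\varphi_i)$ that you use to cast every $\varphi_i$ as a reparameterisation of a single fixed $\varphi_1$ is precisely the trick the paper sets up earlier in Section~\ref{sec:Combining} (the observation that any dominant map into a rational $X$ factors through a chosen birational parameterisation), and your iterative application of Lemma~\ref{lm:Combination} fills in the details the paper leaves implicit.
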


\begin{proof}
In that case, there exist finitely many homomorphisms
$\pi_1,\ldots,\pi_N:T^n \to T^d$, birational when restricted to $X$,
such that each $P_i$ is $\Trop(\pi_j)$-horizontal for at least one $j$.
Then Proposition~\ref{prop:BirationalProjection} shows that
the rational inverse $\varphi_j$ of $\pi_j$ satisfies $P_i
\subseteq \im \Trop(\varphi_j)$. Now use the Combination
Lemma~\ref{lm:Combination}.
\end{proof}

In particular, when all coordinate projections to tori of dimension
$\dim X$ are birational the variety $X$ is tropically unirational. This
is the case in the following statement.

\begin{cor}
For any natural number $n$ the variety of singular $n \times n$-matrices
is tropically unirational.
\end{cor}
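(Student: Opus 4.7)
The plan is to apply Corollary~\ref{cor:dense_unirational} to the family of coordinate projections that forget a single matrix entry. Let $X \subseteq T^{n \times n}$ denote the variety of singular matrices with nowhere-vanishing entries; it is a hypersurface of dimension $d = n^2 - 1$. For each index pair $(k,l)$ let $\pi_{kl}: T^{n \times n} \to T^{n^2-1}$ be the torus homomorphism that forgets the $(k,l)$-entry.

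First I verify that every $\pi_{kl}|_X$ is birational. Expanding the determinant as a linear polynomial in $x_{kl}$ gives $\det = x_{kl}\, c_{kl} + r_{kl}$, where $c_{kl}$ is the $(k,l)$-cofactor and $r_{kl}$ does not involve $x_{kl}$; both polynomials depend only on the remaining entries. Since $c_{kl}$ does not vanish identically, the generic fibre of $\pi_{kl}|_X$ is the single point $x_{kl} = -r_{kl}/c_{kl}$, yielding a rational inverse $\varphi_{kl}$. In particular $X$ is rational.

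Next, use the Bieri--Groves theorem to write $\Trop(X) = \bigcup_i P_i$, each $P_i$ a polyhedron of dimension $n^2 - 1$. The tangent linear space $L_i$ to $P_i$ is then a hyperplane in $\RR^{n^2}$, and such a hyperplane cannot contain every standard basis vector $e_{kl}$. Choosing any $(k,l)$ with $e_{kl} \notin L_i$, the tropicalisation $\Trop(\pi_{kl})$, which simply omits the $(k,l)$-coordinate, is injective on the affine span of $P_i$, so $\dim \Trop(\pi_{kl})(P_i) = n^2 - 1$ and $P_i$ is $\Trop(\pi_{kl})$-horizontal. Corollary~\ref{cor:dense_unirational} then yields tropical unirationality. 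There is no real obstacle: the proof combines multilinearity of the determinant with the pigeonhole observation that a hyperplane in $\RR^{n^2}$ must miss at least one coordinate axis.
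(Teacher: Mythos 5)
Your proposal is correct and follows essentially the same route as the paper: forget one matrix entry at a time, use the cofactor expansion to see each such projection is birational on the singular locus, observe that every $(n^2-1)$-dimensional polyhedron in $\RR^{n^2}$ is horizontal for at least one coordinate projection, and invoke Corollary~\ref{cor:dense_unirational}. You merely spell out the hyperplane-pigeonhole step that the paper states without proof.
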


\begin{proof}
A matrix entry $m_{ij}$ of a singular matrix can be expressed as a
rational function of all other $n^2-1$ entries (with denominator equal
to the corresponding $(n-1) \times (n-1)$-subdeterminant). This shows
that the map $T^{n^2} \to T^{n^2-1}$ forgetting $m_{ij}$ is birational.
Any $(m^2-1)$-dimensional polyhedron in $\RR^{m^2}$ is horizontal with
respect to some coordinate projection $\RR^{n^2}
\to \RR^{n^2-1}$, and this holds {\em a fortiori} for the cones of
tropically singular matrices. Now apply
Corollary~\ref{cor:dense_unirational}.
\end{proof}

Corollary \ref{cor:dense_unirational} also gives an
alternative proof of Corollary~\ref{cor:Affine} stating that
tropicalisations of affine-linear spaces are tropically
unirational. 

\begin{proof}[Second proof of Corollary~\ref{cor:Affine}]
Let $X$ be the intersection with $T^n$ of a $d$-dimensional
linear space in $K^n$. For each polyhedron $P_i$ of $\Trop(X)$
there exists a coordinate projection $\pi:T^n \to T^I$, with $I$
some cardinality-$d$ subset of the coordinates, such that $P_i$ is
$\Trop(\pi_I)$-horizontal. Here we have not yet used that
$X$ is affine-linear. Then the restriction $\pi_I:X \to T^I$
is dominant, and since $X$ is affine-linear, it is also birational.
Now apply Corollary~\ref{cor:dense_unirational}.
\end{proof}

We continue with an example of a determinantal variety of codimension
larger than one whose unirationality is a consequence of Corollary
\ref{cor:dense_unirational}.

\begin{ex}
Let $V \subseteq M_{4 \times 5}(K)$ be the variety of matrices of rank smaller than or equal to $3$. The ideal of $V$ contains all maximal minors and the dimension of $V$ equals $18$. One way to see the latter statement is to write a matrix in $V$ in the following form,
\[
\left(\begin{array}{cc}
A & B\\C & D \end{array}\right), \qquad A \in M_{3 \times 3},\quad B \in M_{3 \times 2},\quad C \in M_{1 \times 3},\quad D \in M_{1 \times 2}.
\]
There are no conditions on $A$, $B$ and $C$, while $D$ is uniquely determined by the choice of $A, B$ and $C$. The dimension thus equals $3 \cdot 3 + 3 \cdot 2 + 1 \cdot 3 = 18$.

Let $(m_{ij})$ denote the standard coordinate functions on $M_{4 \times 5}$. We aim to show that the projection into any subset of $X$ of size $18$ is birational. Let $z_1 = m_{i,j}$ and $z_2 = m_{l,k}$ be the indices of the coordinate functions left out of the projection. Note that if $z_1$ appears in a maximal minor, in which $z_2$ doesn't, then $z_1$ is a rational function of the coordinate functions in the maximal minor. In particular, if $z_1$ and $z_2$ are in different columns, there exist such maximal minors for $z_1$ and $z_2$ and hence both are rational in the remaining $18$ coordinate function.

The case that $z_1$ and $z_2$ are in the same column requires some calculation. Suppose without loss of generality that $z_1 = m_{3,4}$ and $z_2 = m_{4,5}$. Then,
\begin{eqnarray*}
0 & = & m_{35} \det M_{124,234} + m_{45} \det M_{123,234} + m_{15} \det M_{234,234} + m_{25} \det M_{124,234},\\
0 & = & m_{35} \det M_{124,134} + m_{45} \det M_{123,134} + m_{15} \det M_{234,134} + m_{25} \det M_{134,134}
\end{eqnarray*}
by cofactor expansion of the determinants of the matrices $M_{1234,2345}$ and $M_{1234,1345}$. The set of equations has a unique solution for $m_{35}$ and $m_{45}$ when
\[
\det M_{124,134} \det M_{123,134} \neq \det M_{124,234} \det M_{123,134},
\]
showing that the projection is birational.
\end{ex}

We conclude this section with a beautiful example, suggested to us by
Filip Cools and Bernd Sturmfels, and worked out in collaboration with
Wouter Castryck and Filip Cools.

\begin{ex}
Let $Y \subseteq T^5$ be parameterised by $(s^4,s^3 t,\ldots,t^4),\
(s,t) \in T^2$, the affine cone over the rational normal quartic. Write
$X:=\overline{Y+Y} \subseteq T^5$, the first secant variety.  Writing
$z_0,\ldots,z_4$ for the coordinates on $T^5$, $X$ is the hyperplane
defined by
\begin{align*} 
&\det
\begin{bmatrix}
z_0 & z_1 & z_2\\
z_1 & z_2 & z_3\\
z_2 & z_3 & z_4
\end{bmatrix}\\
&=z_0 z_2 z_4 + 2 z_1z_2z_3-z_1^2 z_4-z_0 z_3^2-z_2^3\\
&=a + 2b - c - d - e.
\end{align*}
This polynomial is homogeneous both with respect to the ordinary grading
of $K[z_0,\ldots,z_4]$ and with respect to the grading where $z_i$
gets degree $i$. Hence its Newton polygon is three-dimensional; see
Figure~\ref{fig:HankelNewton}. Modulo its two-dimensional lineality space
the tropical variety $\Trop(X)$ is two-dimensional. Intersecting with a
sphere yields Figure~\ref{fig:HankelTropVar}. 

\begin{figure}
\includegraphics[width=.4\textwidth]{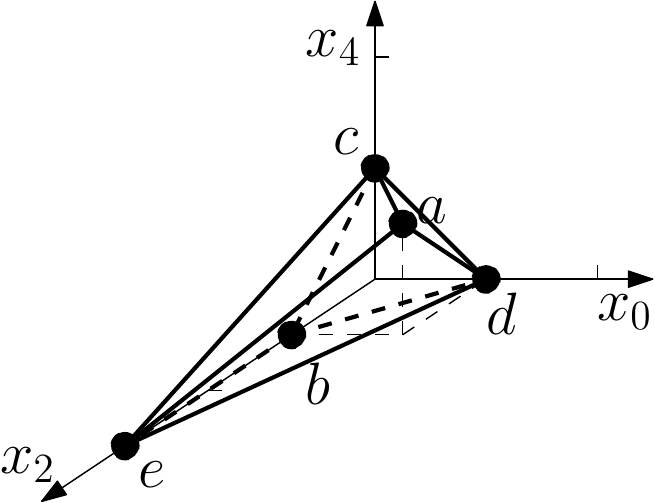}
\caption{The Newton polytope of the Hankel determinant.
Only the exponents of $z_0,z_2,z_4$ have been drawn.}
\label{fig:HankelNewton}
\end{figure}
\begin{figure}
\includegraphics[width=.6\textwidth]{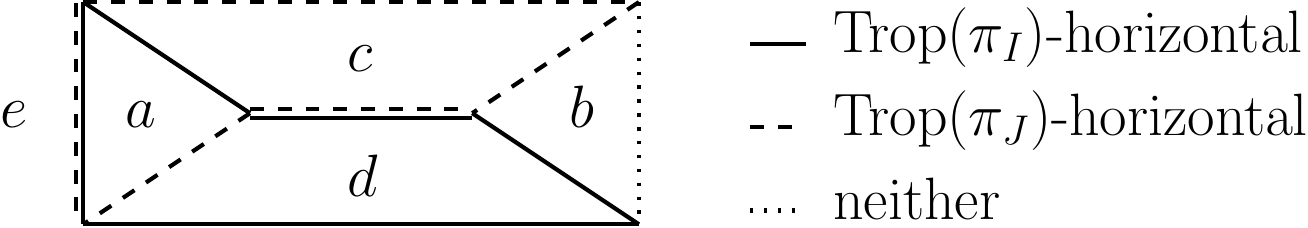}
\caption{The tropical variety of the Hankel determinant.  Two-dimensional
regions correspond to the monomials $a,b,c,d,e$.}
\label{fig:HankelTropVar}
\end{figure}

Now set $I:=\{1,2,3,4\}$
and $J:=\{0,1,2,3\}$. Then the coordinate projections $\pi_I:T^5 \to
T^I,\ \pi_J:=T^5 \to T^I$ are birational, since $z_0$ and $z_4$ appear
only linearly in the Hankel determinant. Let $P$ be a full-dimensional
cone in $\Trop(X)$, and let $\{\alpha,\beta\}$ be the corresponding
edge in the Newton polygon. Then $P$ is $\Trop(\pi_I)$-horizontal if
and only if $\alpha_0 \neq \beta_0$ and $\Trop(\pi_J)$-horizontal if
and only if $\alpha_4 \neq \beta_4$. Figure~\ref{fig:HankelTropVar}
shows that all but one of the cones are, indeed, horizontal with respect
to one of these projections. Let $P$ denote the cone corresponding
to the edge between the monomials $b=z_1z_2z_3$ and $e=z_2^3$.
By Proposition~\ref{prop:BirationalProjection} and the Combination
Lemma~\ref{lm:Combination}, there exists a rational parameterisation
of $X$ whose tropicalisation covers all cones of $\Trop(X)$ except,
possibly, $P$. We now set out to find a parameterisation
whose tropicalisation covers $P$.

Let $\zeta \in P$. By Lemma \ref{lm:Homogenisation} we may assume that $\zeta$ is of the form $(\zeta_0, \zeta_1, 2 \zeta_1, 3 \zeta_1, \zeta_2)$. We aim to show that there exist two reparametrisations of $\phi: T^4 \rightarrow X$, where
\[
\phi(u_0,u_1,v_0,v_1) = (u_0 + v_1, u_0 v_0  + u_1 v_1, u_0 v_0^2 + u_1 v_1^2, u_0 v_0^3 + u_1 v_1^3, u_0 v_0^4 + u_1 v_1^4).
\]
such that $\zeta$ is in the image of at least one of them. Note that from the defining inequalities of $P$ it follows that $\zeta_0 \geq 0$ and $\zeta_2 \geq 4 \zeta_1$. Let $i \in K$ be a fourth root of unity and consider the map $\psi: T^3 \rightarrow T^4$ defined by
\[
\psi(x_0, x_1, x_2) = \big(1 + x_0, -1, i x_1, -x_1(1+x_2 x_1^{-4})\big).
\]
A short computation shows that the restriction of the tropicalisation of $\phi \circ \psi$ to the cone defined by $\xi_0 \geq 0$, $\xi_2 \geq 4 \xi_1$ and $\xi_2 \leq \xi_0 + 4 \xi_1$ is the linear function $(\xi_0, \xi_1, \xi_2) \mapsto (\xi_0, \xi_1, 2 \xi_2, 3 \xi_1, \xi_2)$. If $\zeta$ satisfies $\zeta_2 \leq \zeta_0 + 4 \zeta_1$ then the image of $(\zeta_0, \zeta_1, \zeta_2)$ under this tropicalisation is exactly $\zeta$.

If $\zeta$ satisfies $\zeta_2 \geq \zeta_0 + 4 \zeta_1$ it is in the image of the tropicalisation of $\phi \circ \psi \circ \iota$, where
\[
\iota(x_0, x_1, x_2) = (x_0^{-1}, x_1^{-1}, x_2^{-1}).
\]
The tropicalisation is linear on the cone $0 \geq \xi_0$, $4 \xi_1 \geq \xi_2$ and $\xi_2 \geq \xi_0 + \xi_1$ and maps $-(\zeta_0, \zeta_1, \zeta_2)$ to $\zeta$.
\end{ex}

\section{Very local reparameterisations} \label{sec:Local}

Let $X \subseteq \T^n$ be a $d$-dimensional rational variety that is the
closure of the image of a rational map $\varphi:T^m \dto T^n$. Suppose
without loss of generality that $X$ is defined over a valued field
$(K, v)$ such that $v(K^*)$ is a finite dimensional vector space over
$\QQ$. Write $\bar{\xi}$ for the image of $\xi \in \RR$ under the
canonical projection $\RR \rightarrow \RR/v(K^*)$. We can now state the
main result of this section and its corollary.

\begin{thm}\label{thm:local_param}
Assume that the field $K$ has characteristic zero.
Let $\xi \in \Trop(X)$ and set $d$ to be the dimension of the
$\QQ$-vectorspace spanned by $\bar{\xi}_1, \ldots, \bar{\xi}_n$. There
exists a rational map $\alpha:T^d \dto T^m$ and an open subset $U
\subseteq \RR^d$ such that the restriction of $\Trop(
\varphi \circ \alpha)$ to $U$ is an injective affine linear map, whose image contains
$\xi$.
\end{thm}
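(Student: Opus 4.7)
The strategy is to lift $\xi$ to a smooth point of $X$, build a $d$-dimensional formal parametrisation through it, lift along $\varphi$, truncate to an algebraic map, and if necessary descend to $K$.

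First, possibly after replacing $K$ by an extension whose value group contains the $\QQ$-span of $\xi_1, \ldots, \xi_n$, use the fundamental theorem of tropical geometry to pick $x \in X$ smooth with $v(x) = \xi$ and in the image of $\varphi$, so that $x = \varphi(y)$ for some $y \in T^m$ at which $\varphi$ is smooth. The $\QQ$-rank hypothesis allows us, after a monomial change of coordinates on $T^n$, to assume that $\bar{\xi}_1, \ldots, \bar{\xi}_d$ are $\QQ$-linearly independent in $\RR/v(K^*)$ and that the coordinate projection $\pi: T^n \to T^d$ onto the first $d$ coordinates restricts to a smooth map on $X$ at $x$.

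Next, characteristic zero and smoothness of $\pi|_X$ at $x$ yield, via the formal implicit function theorem, a formal section $\sigma: \widehat{T^d}_{\pi(x)} \to X$ with $\sigma(\pi(x)) = x$: concretely $\sigma_j(t) = t_j$ for $j \leq d$, while $\sigma_j$ for $j > d$ is a formal power series around $\pi(x)$ cut out by the defining equations of $X$. Smoothness of $\varphi$ at $y$ then lifts $\sigma$ formally to $\tilde{\sigma}: \widehat{T^d}_{\pi(x)} \to \widehat{T^m}_y$ with $\varphi \circ \tilde{\sigma} = \sigma$ and $\tilde{\sigma}(\pi(x)) = y$.

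Finally, truncate $\tilde{\sigma}$ at sufficiently high order to an algebraic rational map $\alpha: T^d \dto T^m$. A Newton-polytope argument shows that for large enough truncation order $\Trop(\varphi \circ \alpha)$ coincides with $\Trop(\sigma)$ on some open $U \ni v(\pi(x))$. On $U$ the first $d$ components of $\Trop(\sigma)$ are the coordinates $\tau_1, \ldots, \tau_d$, so $\Trop(\sigma)|_U$ is injective and affine linear, and its image contains $\Trop(\sigma)(v(\pi(x))) = v(x) = \xi$. If we enlarged $K$, a final descent step replaces the coefficients of $\alpha$ by $K$-rational elements with the same valuations, which leaves the tropicalisation unchanged while returning $\alpha$ to the original field.

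The main obstacle is ensuring that high-order truncation preserves tropicalisation on an open neighbourhood: one must show that the dominant monomial in each component of $\varphi \circ \alpha$ at $v(\pi(x))$ is stable under truncation of $\tilde{\sigma}$, relying on the combinatorial nature of Newton-polytope vertices to see that finitely many leading terms suffice. A secondary difficulty is the simultaneous coordinate compatibility: both the $\QQ$-rank property of $\bar{\xi}$ and the smoothness of $\pi|_X$ at $x$ must hold for the same choice of monomial coordinates, a condition that is generically available but requires some care to verify.
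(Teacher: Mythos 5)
Your proposal takes a genuinely different route from the paper's. The paper works over the transcendental extension $L = K(t_1, \ldots, t_d)$ equipped with the valuation $w(t_i) = \tau_i$, where $\tau_1, \ldots, \tau_d$ are chosen so that their images in $\RR/v(K^*)$ are $\QQ$-linearly independent and span the same space as $\bar{\xi}_1, \ldots, \bar{\xi}_n$; it then locates a point of $T^m(\overline{L_\xi})$ above $\xi$ and approximates it inside $K[t_1^{\QQ}, \ldots, t_d^{\QQ}]$ via a Hensel-type density result. You instead take the formal-geometric route: implicit function theorem at a lift $x$ of $\xi$, formal lift along $\varphi$, truncation. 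Unfortunately, the step you flag as ``the main obstacle'' is a genuine gap, not a technicality, and it is the heart of why the naive approach fails.

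The difficulty is a mismatch of expansions. Your formal section $\tilde{\sigma}$ is a power series in $(t_j - \pi(x)_j)$, centred at a point of $T^d(K)$, whereas $\Trop(\varphi \circ \alpha)$ is read off from the Laurent-monomial expansion of $\varphi\circ\alpha$ in $t_1,\ldots,t_d$. Re-expanding in the $t_j$ thoroughly scrambles the coefficients, and cancellations in valuation can and generically do occur. One only gets the inequality $\Trop(\varphi\circ\alpha)(v(\pi(x))) \le v(\varphi(\alpha(\pi(x))))$ componentwise, and you offer no mechanism forcing equality with $\xi$, nor any reason that a \emph{single} term dominates in each component on a neighbourhood of $v(\pi(x))$ --- which is precisely what is needed for local affine-linearity and for the differential to have full rank $d$. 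The paper's choice of $\tau_i$ with $\QQ$-linearly independent images modulo $v(K^*)$ is exactly the device that rules this out: any two distinct Laurent monomials $c\, s^{\beta}$ and $c'\, s^{\beta'}$ with $c, c' \in K^*$ then have \emph{distinct} $w$-valuations, so every component of $\varphi$ evaluated at the approximating point has a unique minimal-valuation term, automatically giving linearity near $\tfrac{1}{e}(\tau_1,\ldots,\tau_d)$ and injectivity of the resulting linear map. Your construction does not reproduce this safeguard, and there is no reason any truncation order suffices. A secondary problem is the descent step: if you enlarge $K$ so its value group contains the $\QQ$-span of the $\xi_i$, the coefficients of your $\alpha$ acquire valuations outside $v(K^*)$, and you cannot ``replace them by $K$-rational elements with the same valuations.'' The paper sidesteps this entirely because its $\alpha$ is already a tuple of Laurent polynomials in $s_1,\ldots,s_d$ with coefficients in $K$.
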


\begin{cor}
Assume that $K$ has characteristic zero.
Let $\{C_1, \ldots, C_k\}$ be a finite set of $v(K^*)$-rational polyhedra of dimension $\dim X$ such that
\[
\Trop(X) = \bigcup_{i=1}^k C_i.
\]
There exist a natural number $p$ and a rational map $\alpha:T^p \dto T^m$
such that the image of $\Trop(\varphi \circ \alpha)$ intersects each $C_i$
in a $\dim X$-dimensional subset.
\end{cor}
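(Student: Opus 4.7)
The plan is to apply Theorem~\ref{thm:local_param} at one well-chosen point of each polyhedron $C_i$, and then glue the resulting $k$ local parameterisations into a single reparameterisation of $\varphi$ by iterated use of the Combination Lemma~\ref{lm:Combination}. Write $d = \dim X$.

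For each $i$ I would choose a point $\xi^{(i)}$ in the relative interior of $C_i$ such that $\bar{\xi}^{(i)}_1, \ldots, \bar{\xi}^{(i)}_n$ span a $\QQ$-subspace of $\RR/v(K^*)$ of dimension exactly $d$, and such that $\xi^{(i)}$ lies in no other $C_j$ and in no proper face of $C_i$. To see that such a point exists, use the $v(K^*)$-rationality of $C_i$ to parametrise its relative interior as $v_0 + \sum_{j=1}^{d} t_j w_j$ with $v_0 \in v(K^*)^n$ and $w_1, \ldots, w_d \in \QQ^n$ linearly independent. Since $v(K^*)$ is finite-dimensional over $\QQ$ while $\RR$ is not, a generic $(t_1, \ldots, t_d) \in \RR^d$ satisfies that $\bar{t}_1, \ldots, \bar{t}_d$ are linearly independent in $\RR/v(K^*)$; pushing through the rank-$d$ rational matrix with columns $w_j$ gives a point whose coordinates span a $d$-dimensional $\QQ$-subspace of $\RR/v(K^*)$. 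Further genericity (avoiding the finitely many other $C_j$ and the finitely many proper faces of $C_i$) is automatic on a dense open subset.

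Applying Theorem~\ref{thm:local_param} at each $\xi^{(i)}$ yields a rational map $\alpha_i : T^d \dto T^m$ and an open $U_i \subseteq \RR^d$ such that $\Trop(\varphi \circ \alpha_i)|_{U_i}$ is injective and affine linear, with image containing $\xi^{(i)}$; this image is then a $d$-dimensional piece of $\Trop(X)$ through $\xi^{(i)}$. Since by construction a sufficiently small neighbourhood of $\xi^{(i)}$ in $\Trop(X)$ is contained in $C_i$, the image of $\Trop(\varphi \circ \alpha_i)$ intersects $C_i$ in a $d$-dimensional subset. Iterating the Combination Lemma~\ref{lm:Combination} $k-1$ times then produces a single map $\alpha : T^p \dto T^m$ with $\im \Trop(\varphi \circ \alpha) \supseteq \bigcup_{i} \im \Trop(\varphi \circ \alpha_i)$, which therefore intersects every $C_i$ in a $d$-dimensional subset, as required.

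The main obstacle I expect is the base-point selection: the $\QQ$-dimension appearing in Theorem~\ref{thm:local_param} must equal $d$, and this hinges crucially on $C_i$ being $v(K^*)$-rational and on $v(K^*)$ being of finite $\QQ$-dimension. A secondary check needed before invoking the Combination Lemma is dominance of the $\alpha_i$; if Theorem~\ref{thm:local_param} does not already guarantee this, one can pass to a dominant modification of $\alpha_i$ by appending free coordinates that do not affect the tropical image.
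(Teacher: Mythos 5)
Your proposal follows the same route as the paper's own (two-line) proof: apply Theorem~\ref{thm:local_param} at a suitable point in each $C_i$, then glue via the Combination Lemma~\ref{lm:Combination}. The difference is that you fill in two details the paper leaves implicit, and both are worth spelling out. The first is the base-point selection: the theorem's integer $d$ is the $\QQ$-dimension of the span of $\bar\xi_1,\ldots,\bar\xi_n$ in $\RR/v(K^*)$, and it does not \emph{a priori} equal $\dim X$. Your observation — parametrise the affine hull of $C_i$ as $v_0 + \sum_j t_j w_j$ with $v_0\in v(K^*)^n$, $w_j\in\QQ^n$ independent (possible by $v(K^*)$-rationality), and use the finite $\QQ$-dimension of $v(K^*)$ to make $\bar t_1,\ldots,\bar t_{\dim X}$ linearly independent for generic $t$ — is exactly what is needed and is genuinely absent from the paper's proof. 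The second is dominance: Theorem~\ref{thm:local_param} produces a map $\alpha:T^d\dto T^m$ that is certainly not dominant when $d<m$, yet the Combination Lemma as stated requires dominant $\alpha_i$. You are right to flag this. It is in fact harmless — the Combination Lemma's proof never uses dominance of the $\alpha_i$, and alternatively one can first combine each local $\alpha_i$ with the identity $T^m\to T^m$ to obtain a dominant map with at least the same tropical image — but the paper does not address it either, so it is a real and useful observation.

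One small caveat in your point selection: you ask for $\xi^{(i)}$ to lie in no other $C_j$, which cannot be arranged if some $C_j$ contains $C_i$. This is not fatal: if $C_i\subseteq C_j$ then covering a full-dimensional neighbourhood of a relative-interior point of $C_i$ in $\Trop(X)$ already gives a full-dimensional intersection with $C_i$ (the local image is forced into $\operatorname{aff}(C_i)=\operatorname{aff}(C_j)$); alternatively, one may discard polyhedra that are contained in others before starting. Either patch restores full generality, and in the intended setting (the $C_i$ as maximal cells of a polyhedral complex from Bieri--Groves) your formulation is already fine.
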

\begin{proof}
By the theorem, for each cone $C_i$ there exists a reparametrisation $\alpha_i$ such that the tropicalization of $\varphi \circ \alpha_i$ hits $C_i$ in a full dimensional subset. They can be combined using the Combination Lemma.
\end{proof}

The main step in the proof of the theorem is Proposition \ref{prop:denseness}, which is a valuation theoretic result.

Let $\xi$ be a point of $\RR^n$. Such a point defines a valuation $v_{\xi}$ on the field of rational functions $L = K(y_1, \ldots, y_n)$ of $T^n$ by
\[
v_{\xi}(h) = \Trop(h)(\xi), \qquad h \in L
\]
Let $L_{\xi}$ denote the completion of $L$ with respect to $v_{\xi}$
and denote its algebraic closure by $\overline{L_{\xi}}$. That closure is
equipped with the unique valuation whose restriction to $L_\xi$ equals
$v_{\xi}$ \cite[\S 144]{Waerden67}. Denote by $K[y_1^{\QQ}, \ldots,
y_n^{\QQ}]$ the subring of $\overline{L_{\xi}}$ generated by all roots of
the elements $y_1,\ldots,y_n$.

The next lemmata deal with the case $n = 1$. They allow us to prove
Proposition~\ref{prop:denseness} below by means of induction on the
number of variables.

\begin{lem}\label{lm:dense1}
Let $\xi \in \RR$ such that $\xi$ is not in the $\QQ$-vectorspace spanned by $v(K^*)$. Then $K[t^{-1},t]$ is dense in $K(t)_{\xi}$ 
\end{lem}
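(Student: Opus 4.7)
The plan is to reduce the statement to showing that $K[t^{-1},t]$ is dense in $K(t)$ under $v_\xi$; since $K(t)$ is dense in its completion $K(t)_\xi$ by definition, this suffices.

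The crucial consequence of the hypothesis $\xi \notin \QQ \cdot v(K^*)$ that I would exploit is the following uniqueness statement: for any non-zero polynomial $g=\sum_\alpha d_\alpha t^\alpha \in K[t]$, the minimum
\[ v_\xi(g) = \min_\alpha(v(d_\alpha)+\alpha \xi) \]
is attained at a \emph{unique} index $\alpha_0$. Indeed, if two distinct indices $\alpha \neq \beta$ realized the minimum, then $(\alpha-\beta)\xi = v(d_\beta)-v(d_\alpha) \in v(K^*)$, forcing $\xi \in \QQ \cdot v(K^*)$, a contradiction.

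Using this, I would factor
\[ g = d_{\alpha_0} t^{\alpha_0}(1-r), \quad \text{where } r = -\sum_{\alpha \neq \alpha_0} d_{\alpha_0}^{-1} d_\alpha\, t^{\alpha-\alpha_0} \in K[t^{-1},t], \]
and observe that $v_\xi(r)>0$ by the strict minimality above (each monomial of $r$ has positive $v_\xi$). Consequently the geometric series $\sum_{k\geq 0} r^k$ is $v_\xi$-Cauchy, hence converges in $K(t)_\xi$, and its partial sums are Laurent polynomials. Multiplying by the Laurent monomial $d_{\alpha_0}^{-1} t^{-\alpha_0}$ exhibits $1/g$ as a limit of elements of $K[t^{-1},t]$ with respect to $v_\xi$.

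For a general rational function $h=f/g$ with $f,g \in K[t]$, continuity of multiplication in $(K(t)_\xi, v_\xi)$ yields that the products $f\cdot p_n$, with $p_n \in K[t^{-1},t]$ approximating $1/g$, approximate $h$; since each $f\cdot p_n$ lies in $K[t^{-1},t]$, this shows that $K[t^{-1},t]$ is dense in $K(t)$, and hence in $K(t)_\xi$. The only place where the argument could break down is the uniqueness of the minimizing monomial, and that is precisely guaranteed by the hypothesis on $\xi$; no obstacle beyond that is expected.
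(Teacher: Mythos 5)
Your proof is correct and takes essentially the same route as the paper's: both exploit the hypothesis on $\xi$ to get a unique minimizing monomial in the denominator, factor it out, and expand the remainder $1/(1-r)$ as a geometric series converging in $K(t)_\xi$, then multiply by the numerator. (In fact your version is slightly cleaner in its bookkeeping of signs and inverses than the displayed computation in the paper.)
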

\begin{proof}
Let $p/q \in K(t)$. If $q$ is a monomial we are done. Suppose it isn't. The valuation on $q$ is of the form $v_{\xi}(q) = \min_i v(p_i) + i\xi$. Moreover, the minimum is attained exactly once since otherwise $\xi$ would be a $\QQ$-multiple of some element of $v(K^*)$. Say it is attained at $j$. Compute,
\begin{eqnarray*}
\frac{p}{q}
& = & \frac{p}{a_j t^j + (q - a_j t^j)}\\[0.5em]
& = & a_j t^j \frac{p}{1 + (q - a_j t^j)/(a_j t^j)}\\[0.5em]
& = & a_j t^j p \sum_{n = 0}^{\infty} \Big(\frac{(q - a_j t^j)}{a_j t^j}\Big)^n
\end{eqnarray*}
The convergence of the power series with respect to $w$ is a consequence of $w(q - a_j t^j) > w(a_j t^j)$. The limit is easily seen to to coincide with $p/q$. This completes the proof.
\end{proof}

\begin{lem}\label{lm:dense2}
Suppose $K$ is algebraically closed of characteristic $0$.
Then $K[t^{\QQ}]$ is dense in $\overline{K(t)_{\xi}}$, 
\end{lem}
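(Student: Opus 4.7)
The plan is to reduce to Lemma~\ref{lm:dense1} by means of a Newton--Puiseux argument, keeping the hypothesis $\xi \notin \QQ v(K^*)$ from that lemma (without it the statement fails, as can be seen by trying to approximate $1/(t-a)$ when $\xi=0$ and $v(a)=0$: the reduction modulo the maximal ideal of any would-be approximation would have to equal $1/(s-\bar a)$, which is not a polynomial in $s^\QQ$ over the residue field). Fix $y \in \overline{L_\xi}$; the first step is to show that $y \in L_\xi(t^{1/n})$ for some $n \geq 1$, where $t^{1/n}$ denotes a chosen $n$-th root of $t$ in $\overline{L_\xi}$.

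Granting this, the conclusion is immediate. Any element of $L_\xi(t^{1/n})$ is a polynomial in $t^{1/n}$ of degree less than $n$ with coefficients in $L_\xi$, so we can write $y = \sum_{i=0}^{n-1} c_i t^{i/n}$ with $c_i \in L_\xi$. By Lemma~\ref{lm:dense1} each $c_i$ is a $v_\xi$-limit of elements $p_{i,k} \in K[t,t^{-1}]$, and the valuation of $t^{i/n}$ is the fixed number $i\xi/n$. Consequently the approximants $\sum_{i=0}^{n-1} p_{i,k} t^{i/n}$ lie in $K[t^\QQ]$ and converge to $y$ in the unique extension of $v_\xi$ to $\overline{L_\xi}$, as required.

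The hard part is the first step: the classical Newton--Puiseux theorem is stated for $K((t))$ rather than for the completion $L_\xi$, but a parallel argument should carry through. The element $y$ satisfies a minimal polynomial $P(X)=\sum_j a_j X^j \in L_\xi[X]$, and I would analyse its Newton polygon formed from the points $(j, v_\xi(a_j)) \in \RR^2$. Each lower edge has a rational slope whose denominator divides $\deg P$; after a substitution $X = t^{r/s} Z$ and reduction modulo the maximal ideal, one obtains a nontrivial polynomial over the residue field of $v_\xi$, which coincides (by the discussion preceding Lemma~\ref{lm:dense1}) with the algebraically closed residue field of $v$ on $K$. This residual polynomial factors completely, and iterating the process and invoking Hensel's lemma over the finite (hence complete) extension $L_\xi(t^{1/n})$ then produces a root of $P$ in that field; the characteristic zero hypothesis ensures that the derivative at the simple factor to be lifted does not vanish, so Hensel applies.
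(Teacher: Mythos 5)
Your strategy is recognisably the same Newton--polygon/Hensel argument that the paper uses (the paper follows van der Waerden's Puiseux proof), but you organise it differently, and the organisation introduces both a pleasant simplification and a genuine gap.

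What you do differently: you route through the stronger intermediate claim that $\overline{L_\xi}=\bigcup_n L_\xi(t^{1/n})$, and then finish cleanly by writing $y=\sum_{i=0}^{n-1}c_it^{i/n}$ with $c_i\in L_\xi$ and invoking Lemma~\ref{lm:dense1} coefficientwise. That last reduction is correct: since $\xi\notin\QQ v(K^*)$, the element $t^{1/n}$ has degree $n$ over $L_\xi$, so $1,t^{1/n},\dots,t^{(n-1)/n}$ is an $L_\xi$-basis, and the sum $\sum_i p_{i,k}t^{i/n}\in K[t^\QQ]$ converges to $y$ because each $v_\xi(c_i-p_{i,k})\to\infty$. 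The paper does not prove the structural claim $\overline{L_\xi}=\bigcup_n L_\xi(t^{1/n})$; it inducts directly on the degree of the minimal polynomial and proves approximability, without ever needing to know that the root lands in a finite extension $L_\xi(t^{1/n})$. Your route is stronger and arguably more conceptual, but correspondingly needs a more careful justification of the Puiseux-type step. Your remark about the hypothesis $\xi\notin\QQ v(K^*)$ and the $1/(t-a)$ example is a valid observation (the paper leaves this implicit, saying only ``by the conditions on $\xi$'').

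The gap is exactly where the paper is most careful. After your substitution $X=t^{r/s}Z$, the reduced polynomial $\bar Q\in k[Z]$ certainly splits into linear factors, but nothing in your sketch prevents it from being $(Z-a)^d$ with a single repeated root $a\neq 0$. In that case there is no ``simple factor to be lifted'' and no coprime factorisation, so Hensel's lemma gives you nothing; your appeal to the characteristic-zero hypothesis (``the derivative at the simple factor does not vanish'') presupposes the existence of a simple factor rather than establishing it. The paper handles this by first performing the Tschirnhaus shift $S\mapsto S-\tfrac{1}{d}a_{d-1}$ so that the coefficient of $S^{d-1}$ vanishes; then $\bar Q=(Z-a)^d$ would force the coefficient $-da$ of $Z^{d-1}$ to be zero, hence $a=0$ and $\bar Q=Z^d$, which is excluded because, after the scaling, some $b_i$ has valuation exactly zero. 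This is the step at which characteristic zero actually enters, and it is what guarantees that $\bar Q$ has at least two distinct roots and hence a genuine coprime factorisation to which Hensel applies, allowing the degree induction to proceed. You should fold the Tschirnhaus shift into your argument before extracting Newton-polygon slopes; once you do, both your intermediate claim and the lemma follow by the same induction on degree that the paper carries out.
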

\begin{proof}
Denote the residue field of $K(t)_{\xi}$ by $k$. Note that it is also the residue field of $K(t)$ under $\xi$, by the conditions on $\xi$.

We prove by induction on $d$ that all zeroes in $\overline{K(t)_{\xi}}$ of a polynomial of degree $d$ over $K(t)_{\xi}$ can be approximated arbitrarily well with elements of $K[t^\QQ]$. For $d=1$ this is the content of Lemma \ref{lm:dense1}. Assume that the statement is true for all degrees lower than $d$. We follow the proof of \cite[\S 14, Satz]{Waerden73}.

Let $P(S)=S^d + a_{d-1} S^{d-1} + \ldots + a_0 \in K(t)_{\xi}[S]$. After a coordinate change replacing $S$ by $S-\frac{1}{d} a_{d-1}$ we may
assume that $a_{d-1}=0$. Indeed, a root $s$ of the original polynomial can be approximated well by elements of $K[t^\QQ]$ if and only if the
root $s+\frac{1}{d} a_{d-1}$ can be approximated well, since $a_{d-1}$ itself can be approximated well.

If now all $a_i$ are zero, then we are done. Otherwise, let the minimum among the numbers $v(a_{d-i})/i$ be $\omega+q\tau$, where $\omega \in
v(K^*)$ and $q \in \QQ$, and let $c$ be a constant in $K$ with valuation $\omega$. Setting $S=ct^q U$ transforms $P$ into
\[ c^d t^{dq} (U^d + b_{d-2} U^{d-2} + \ldots + b_0), \] where each $b_i$ is an element of $K(t^{1/p})_{\xi}$ of valuation
at least zero, with $p$ the denominator of $q$. Moreover, some $b_i$ has valuation zero. Let $Q(U)$ denote the polynomial in the brackets.
The image of $Q(U)$ in the polynomial ring $k[U]$ over the residue field is neither $U^d$ as $b_i$ has non-zero image in $L$, nor a $d$-th power
of an other linear form as the coefficient of $U^{d-1}$ is zero. Hence the image of $Q(U)$ in $k[U]$ has at least two distinct roots in the
algebraically closed residue field $k$, and therefore
factors over $k$ into two relatively prime polynomials. By
Hensel's lemma \cite[\S 144]{Waerden73}, $Q$ itself factors over $K(t^{1/p})_{\xi}$ into two polynomials of positive degree. By induction the roots of these polynomials can be approximated arbitrarily well by elements of $K[t^\QQ]$, hence so can the roots of $Q$ and of $P$.
\end{proof}

\begin{prop}\label{prop:denseness}
Let $(K, v)$ be an algebraically closed field of
characteristic $0$ with valuation $v$ and $\xi \in \RR^n$
whose entries are $\QQ$-linearly independent over $\RR /
v(K^*)$. Then $K[y_1^{\QQ}, \ldots, y_n^{\QQ}]$ is dense in
$\overline{L_{\xi}}$.
\end{prop}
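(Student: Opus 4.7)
The plan is to induct on $n$, with Lemma~\ref{lm:dense2} providing the base case $n=1$: the hypothesis that $\bar\xi_1$ is $\QQ$-linearly independent in $\RR/v(K^*)$ amounts to $\xi_1 \notin \QQ\cdot v(K^*)$, which is exactly the condition that Lemma~\ref{lm:dense1} (and hence Lemma~\ref{lm:dense2}) requires.

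For the induction step, set $\xi' := (\xi_1,\ldots,\xi_{n-1})$, $L' := K(y_1,\ldots,y_{n-1})$, and let $F := \overline{L'_{\xi'}}$, endowed with the unique extension of $v_{\xi'}$. By the inductive hypothesis, $K[y_1^\QQ,\ldots,y_{n-1}^\QQ]$ is dense in $F$. Since $K$ is algebraically closed its value group $v(K^*)$ is divisible, and one checks that $v(F^*) = v(K^*) + \QQ\xi_1 + \cdots + \QQ\xi_{n-1}$, again divisible. The $\QQ$-linear independence of $\bar\xi_1,\ldots,\bar\xi_n$ in $\RR/v(K^*)$ translates to $\xi_n \notin v(F^*)$, so Lemma~\ref{lm:dense2} applied with $F$ in place of $K$ and $y_n$ in place of $t$ yields that $F[y_n^\QQ]$ is dense in $\overline{F(y_n)_{\xi_n}}$.

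Next, I would verify that the natural inclusion $L := K(y_1,\ldots,y_n) \hookrightarrow F(y_n)$ is isometric when $F(y_n)$ carries the $\xi_n$-Gauss valuation over $F$. For $f = \sum_\alpha c_\alpha y^\alpha \in K[y_1,\ldots,y_n]$, grouping terms by the last exponent $\alpha_n$ and applying $v_{\xi'}$ coefficient-wise shows that the Gauss valuation of $f$ coincides with $v_\xi(f) = \min_\alpha(v(c_\alpha) + \alpha\cdot\xi)$. This extends to $L$ and yields isometric embeddings $L_\xi \hookrightarrow F(y_n)_{\xi_n}$ and $\overline{L_\xi} \hookrightarrow \overline{F(y_n)_{\xi_n}}$, via uniqueness of the valuation extension to algebraic closures of rank-one complete fields; so density in the larger field forces density in the smaller.

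To finish: given $z \in \overline{L_\xi}$ and $\epsilon > 0$, Lemma~\ref{lm:dense2} applied over $F$ supplies a finite sum $P = \sum_j c_j y_n^{q_j} \in F[y_n^\QQ]$ with $|z - P| < \epsilon$, and by the inductive hypothesis each coefficient $c_j$ can be approximated by some $\tilde c_j \in K[y_1^\QQ,\ldots,y_{n-1}^\QQ]$ to any desired precision; choosing this precision small enough relative to the finitely many weights $q_j\xi_n$ places $\sum_j \tilde c_j y_n^{q_j} \in K[y_1^\QQ,\ldots,y_n^\QQ]$ within $2\epsilon$ of $z$. The main obstacle I anticipate is the isometric-embedding step, specifically justifying that $\overline{L_\xi}$ truly embeds into $\overline{F(y_n)_{\xi_n}}$ with matching valuations; this is where one must lean on uniqueness of the extension of a rank-one valuation to an algebraic closure, after which the two layers of approximation combine by a routine uniform estimate.
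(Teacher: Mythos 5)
Your proof is correct and is essentially the argument the paper has in mind: it simply writes ``Follows from Lemma~\ref{lm:dense2} by induction on the number of variables,'' and your induction step---passing to $F = \overline{L'_{\xi'}}$, noting $F$ is again algebraically closed of characteristic zero with $\xi_n \notin \QQ\cdot v(F^*)$, reapplying Lemma~\ref{lm:dense2} over $F$, and stitching the two layers of approximation together through the isometric embedding $L_\xi \hookrightarrow F(y_n)_{\xi_n}$---is exactly how one makes that one-line reduction precise. The only phrase worth tightening is ``density in the larger field forces density in the smaller,'' which is not a valid inference in general; what actually saves you, and what your last paragraph correctly does, is that the final approximants $\sum_j \tilde c_j y_n^{q_j}$ land in $K[y_1^\QQ,\ldots,y_n^\QQ]\subseteq\overline{L_\xi}$ and the distance estimate transfers by isometry.
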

\begin{proof}
Follows from Lemma \ref{lm:dense2} by induction on the number of variables.
\end{proof}

We are now ready to prove the main result.

\begin{proof}[Proof of Theorem \ref{thm:local_param}]
Choose $\tau_1, \ldots, \tau_d \in \RR$ such that their projections in $\RR/v(K^*)$ form a basis of the $\QQ$-vectorspace spanned by $\bar{\xi_1}, \ldots, \bar{\xi_n}$. Let $t_1, \ldots, t_n$ be variables and denote by $L$ the field $K(t_1, \ldots, t_n)$ equipped with the unique valuation $w$ that extends $v$ and satisfies $w(t_i) = \tau_i$.

There exists a point $x' \in T^m(\overline{L_{\xi}})$ such that $w(\varphi(x')) = \xi$. By Proposition \ref{prop:denseness} there exists an approximation $x \in T^m(K[t_1^{\QQ}, \ldots, t_d^{\QQ}]$) of $x'$ that satisfies $w(\varphi(x)) = \xi$. Choose $e \in \NN$ such that every coefficient of $x$ is already in $K[t_1^{\pm 1/e}, \ldots, t_d^{\pm 1/e}]$ and set $s_i = t_i^{1/e}$. Thus, $x \in T^m(K[s_1^{\pm}, \ldots, s_d^{\pm}])$, and hence defines a rational map $T^d \rightarrow T^m$. Denote this map $\alpha$. We show that there exists a neighbourhood of $\sigma = \frac{1}{e} (\tau_1, \ldots, \tau_d)$ such that the restriction of $\Trop(\varphi \circ \alpha)$ to $U$ satisfies the conclusions of the theorem.

First, note that, by construction of $\alpha$,  $\Trop(\varphi \circ \alpha)(\sigma) = \xi$.  Every component $\varphi_i(x)=\varphi_i(\alpha(s_1,\ldots,s_d))$ of $\varphi$ is a Laurent polynomial over $K$ in the $s_j$ with a unique term $z_i s_1^{b_{i,1}} \cdots s_d^{b_{i,d}}$ of minimal valuation $\xi_i=v(z_i)+\sum_{j=1}^d b_{i,j} \sigma_j$. If we let $\sigma$ vary in a small neighbourhood and change the valuations of the $s_i$ accordingly, then for each $i$ the same term of $\varphi_i$ has the minimal valuation. Hence $\TT(\varphi \circ \alpha)$ is linear at $\sigma$ with differential the matrix $(b_{ij})$. Finally, as the numbers $\eta_1,\ldots,\eta_n$ span the same $\QQ$-space as $\sigma_1,\ldots,\sigma_d$ modulo $v(K^*)$, the matrix $(b_{ij})$ has full rank $d$. This completes the proof.
\end{proof}

\section{Concluding remarks} \label{sec:Conclusion}

The concept of tropical surjectivity of a rational map seems natural and concrete, and, as far as we know, not to have been studied before. This paper presented some methods of determining whether a rational map is tropically surjective, and aims to be a starting point for further study. In particular, the question whether every unirational variety is tropically unirational is still open. It seems likely that tehniques from geometric tropicalisation will prove useful in making further progress on this question.

\bibliographystyle{plain}
\bibliography{tropunirational}

\begin{thebibliography}{10}

\bibitem{develin_santos_sturmfels:2005}
Mike Develin, Francisco Santos, and Bernd Sturmfels.
\newblock On the rank of a tropical matrix.
\newblock In {\em Combinatorial and computational geometry}, volume~52 of {\em
  Math. Sci. Res. Inst. Publ.}, pages 213--242. Cambridge Univ. Press,
  Cambridge, 2005.

\bibitem{maclagan_sturmfels}
Bernd~Sturmfels Diane~Maclagan.
\newblock Tropical geometry.
\newblock in preparation.

\bibitem{Dickenstein07}
Alicia Dickenstein, Eva~Maria Feichtner, and Bernd Sturmfels.
\newblock Tropical discriminants.
\newblock {\em J. Am. Math. Soc.}, 20(4):1111--1133, 2007.

\bibitem{draisma:2008}
Jan Draisma.
\newblock A tropical approach to secant dimensions.
\newblock {\em J. Pure Appl. Algebra}, 212(2):349--363, 2008.

\bibitem{Gelfand94}
Israel~M. Gelfand, Mikhail~M. Kapranov, and Andrei~V. Zelevinsky.
\newblock {\em Discriminants, resultants, and multidimensional determinants}.
\newblock Mathematics: Theory \& Applications. Birkh\"auser, Boston, MA, 1994.

\bibitem{Kapranov91}
M.M. Kapranov.
\newblock A characterization of {A}-discriminantal hypersurfaces in terms of
  logarithmic {G}auss map.
\newblock {\em Math. Ann.}, 290(2):277--285, 1991.

\bibitem{payne:2009}
Sam Payne.
\newblock Fibers of tropicalization.
\newblock {\em Math. Z.}, 262(2):301--311, 2009.

\bibitem{Rincon11}
Felipe Rinc\'on.
\newblock Computing tropical linear spaces.
\newblock 2011.
\newblock Preprint available from \verb+http://arxiv.org/abs/1109.4130+.

\bibitem{speyer_thesis:2005}
David~E. Speyer.
\newblock {\em Tropical Geometry}.
\newblock PhD thesis, University of California, Berkeley, 2005.

\bibitem{sturmfels_tevelev:2008}
Bernd Sturmfels and Jenia Tevelev.
\newblock Elimination theory for tropical varieties.
\newblock {\em In “Mathematical Research Letters}, pages 543--562, 2008.

\bibitem{sturmfels_tevelev_yu:2007}
Bernd Sturmfels, Jenia Tevelev, and Josephine Yu.
\newblock The {N}ewton polytope of the implicit equation.
\newblock {\em Mosc. Math. J.}, 7(2):327--346, 351, 2007.

\bibitem{sturmfels_yu:2008}
Bernd Sturmfels and Josephine Yu.
\newblock Tropical implicitization and mixed fiber polytopes.
\newblock In {\em Software for algebraic geometry}, volume 148 of {\em IMA Vol.
  Math. Appl.}, pages 111--131. Springer, New York, 2008.

\bibitem{Waerden67}
Bartel~L. van~der Waerden.
\newblock {\em Algebra. {Z}weiter Teil}, volume~23 of {\em Heidelberger
  {T}aschenb{\"u}cher}.
\newblock Springer Verlag, Berlin Heidelberg New York, fifth edition, 1967.

\bibitem{Waerden73}
Bartel~L. van~der Waerden.
\newblock {\em Einf{\"u}rung in die algebraische Geometrie}, volume~51 of {\em
  Die Grundlehren der mathematischen Wissenschaften}.
\newblock Springer Verlag, Berlin Heidelberg New York, second edition, 1973.

\bibitem{yu_yuster:2007}
Josephine Yu and Debbie Yuster.
\newblock Representing tropical linear spaces by circuits.
\newblock In {\em Formal Power Series and Algebraic Combinatorics,
  Proceedings}, 2007.

\end{thebibliography}

\end{document}